\newtheorem{theorem}{Theorem}[section]
\newtheorem{lemma}[theorem]{Lemma}
\theoremstyle{definition}
\theoremstyle{remark}
\newtheorem{rk}[theorem]{Remark}
\newtheorem{claim}[theorem]{{\it Claim}}
\newenvironment{claim-b}
   {\begin{claim}\rm}
   {\end{claim}}
\long\def\elimina#1{}
\def\R{\mathbb{R}}
\def\Lip{\text{Lip}}
\def\pscal#1#2{#1\cdot #2}
\def\haus{\mathcal{H}^{1}}
\def\distb#1{d_{#1}}
\def\dg{\distb{\Gamma}}
\def\dist{d}
\def\uf{u_f}
\def\vf{v_f}
\def\Lipu{\Lip^1(\Omega)}
\def\Lipug{\Lip^1_{\Gamma}(\Omega)}
\def\proj{\Pi_{\Gamma}}
\def\proje{\Pi^{\epsilon}}
\def\norm{\nu}
\def\curv{\kappa}
\def\Gie{G_i^{\epsilon}}
\def\Gaie{\Gamma_i^{\epsilon}}
\def\nie{\nu^{\epsilon}}
\def\Phie{\Phi_i^{\epsilon}}
\def\le{l}
\def\rid{\mathcal{R}}
\def\Cc{C^{\infty}_c(\R^2\setminus\Gamma)}
\def\Gte{\widetilde{\Gamma}^{\epsilon}}
\def\Gtie{\widetilde{\Gamma}_i^{\epsilon}}
\def\ye{y_{\epsilon}}
\def\ke{\kappa^{\epsilon}}
\def\oray#1{{]\!]#1[\![}}
\def\cray#1{{[\![#1]\!]}}
\DeclareMathOperator{\dive}{div}
\DeclareMathOperator{\spt}{supp}
\DeclareMathOperator{\meas}{\mathcal{L}^2}
\begin{document}
\title[Sandpiles on flat tables with walls]%
{An existence result for the sandpile problem \\ on flat tables with walls}

\author[G.~Crasta]{Graziano Crasta}
\address{Dipartimento di Matematica ``G.\ Castelnuovo'', Univ.\ di Roma I\\
P.le A.\ Moro 2 -- 00185 Roma (Italy)}
\email[Graziano Crasta]{crasta@mat.uniroma1.it}

\author[S.~Finzi Vita]{Stefano Finzi Vita}
\email[Stefano Finzi Vita]{finzi@mat.uniroma1.it}

\date{\today}

\keywords{Distance function, granular matter, flat tray problem,
Hamilton-Jacobi equations, mass transport}
\subjclass[2000]{Primary 35C15, 49J10; Secondary 35Q99, 49J30}

\begin{abstract}
We derive an existence result for solutions of a differential system which
characterizes the equilibria of a particular model in granular matter theory, the
so-called partially open table problem for growing sandpiles.
Such result generalizes
a recent theorem of \cite{CaCa} established for the totally open table problem.
Here,
due to the presence of walls at the boundary, the surface flow density at the
equilibrium may result no more continuous nor bounded, and its explicit mathematical
characterization is obtained by domain decomposition techniques.
At the same time we
show how these solutions can be numerically computed as stationary solutions of a
dynamical two-layer model for growing sandpiles and we present the results of some
simulations.
\end{abstract}

\maketitle

\section{Introduction}

In the last years  an increasing attention has been devoted towards the
study of differential models in granular matter theory
(see, e.g., \cite{ArTs} for an overview of different theoretical approaches and models).
This field of research, which
is of course of strong relevance in the applications, has also been the source of
many new and challenging problems in the theory of partial differential equations
(see, e.g., \cite{AEW,CaCa,EGan,Pr}).

In this paper we deal with the rather simple phenomenon of the
evolution of a sandpile created by pouring dry matter on a
flat bounded table.
In such a model the table is represented by a bounded domain
$\Omega\subset\R^2$,
and the time-independent vertical matter source by
a nonnegative function $f\in L^1(\Omega)$.
Recently, Hadeler and Kuttler \cite{HK}
proposed a new model, extending the ones studied in
\cite{BCRE} and \cite{BdG},
where the description of the heap evolution is based on the
observation that granular matter forms heaps and slopes
(the so-called standing layer), while small amounts of matter
move down along the slopes, forming the so-called
rolling layer.
We also mention \cite{Pr}, where Prigozhin has studied, both from the
theoretical and numerical points of view, a degenerate parabolic problem and its
equivalent formulation as a variational inequality,
and \cite{AEW},
where a similar approach has been used
for growing sandpiles on the whole plane.
It is worth to remark that the two different dynamical models of \cite{HK} and \cite{Pr}
(see for example \cite{PrZ} for a comparison between them)
have theoretically the same set of admissible equilibria.

Let us denote by $u(t,x)$ and $v(t,x)$, $t\geq 0$, $x\in\Omega$,
respectively the heights of the standing
and rolling layers.
Neglecting wind effects,
the local dynamics depends on the local slope $|Du|$ and on the local
density of rolling matter $v$ (that we shall call
\textsl{transport density}).
For stability reasons, at any equilibrium configuration the local slope $|Du|$ cannot exceed a fixed constant
(the \textsl{critical slope}), that we normalize to the value $1$, and it must be maximal where
transport occurs (that is, where $v>0$).
Moreover, we assume that the matter falls down the table when
the base of the heap touches a portion $\Gamma$ of the boundary of $\Omega$.
{}From the model point of view we are thus assuming that
on $\partial\Omega\setminus\Gamma$ we have a (arbitrarily high) vertical wall, while on
$\Gamma$ the table is ``open".
In the following, we shall refer to the \textsl{open table problem} in the
case $\Gamma = \partial\Omega$,
whereas in the \textsl{partially open table problem}, $\Gamma$ will be a
nonempty closed subset of $\partial\Omega$.

The dynamical model proposed in \cite{HK} deals with the
open table problem, but it can be extended to the
partially open table problem in the following way:
\begin{equation}\label{f:HK}
\left\{\begin{array}{ll}
\partial_t v= \dive (v\, Du) - (1-|Du|)v + f \ &\textrm{in } [0,\infty)\times\Omega\\
\partial_t u = (1-|Du|)v \quad
&\textrm{in }[0,\infty)\times\Omega  \\
u(0,\cdot)=v(0,\cdot)=0 \quad &\textrm{in }\Omega\\
u=0 \textrm{ on } \Gamma\ ,\quad v\dfrac{\partial u}{\partial\norm}=0 \textrm{ on }
\partial\Omega\backslash\Gamma\,,
\end{array} \right.
\end{equation}
where $\frac{\partial u}{\partial\norm}$ denotes the normal derivative of $u$.
The nonlinear term which appears in the previous equations with opposite signs
represents the exchange term between the two layers during the growth process.
Before the equilibrium has been reached,
a partial surface flow is allowed also at sub-critical slopes.
As far as we know, a rigorous theory for this model is not known,
and its equilibrium configurations  have not been characterized in the general case.
For the open table case, a finite difference scheme has been proposed in \cite{FFV},
which offers at the same time a tool for the numerical
description of stationary solutions.

{}From the physical considerations above,
an equilibrium configuration $(u,v)$ for (\ref{f:HK}), with $u,v$
nonnegative functions in $\overline{\Omega}$,
must satisfy
\begin{equation}\label{f:prob2}
\begin{cases}
-\dive(v\, Du) = f &\text{in $\Omega$},\\
|Du| \leq 1        &\text{a.e.\ in $\Omega$},\\
|Du| = 1           &\text{in $\{v>0\}$},\\
u=0\ \text{on } \Gamma,
& v\, \dfrac{\partial u}{\partial\norm} = 0\
\text{on $\partial\Omega\setminus\Gamma$}.
\end{cases}
\end{equation}
In the case of the open table problem (i.e., $\Gamma = \partial\Omega$),
solutions of (\ref{f:prob2}) have been completely characterized
by Cannarsa and Cardaliaguet \cite{CaCa}
(see also \cite{CCCG} for an extension to higher dimensions).
More precisely,
denoting by $d_{\partial\Omega}\colon\overline{\Omega}\to\R$ the distance function from the
boundary of $\Omega$,
they proved that there exists a nonnegative continuous function
$\vf$ (with an explicit integral representation)
such that $(d_{\partial\Omega},\vf)$ is a solution of (\ref{f:prob2}),
and any other solution $(u,v)$ satisfies
$v = \vf$ in $\Omega$ and $u=d_{\partial\Omega}$ in $\{\vf > 0\}$.

\medskip
Aim of this paper is to extend these results
to the case of the partially open table problem.
As we shall see in the sequel, the presence of vertical walls
has a relevant influence on the regularity of stationary solutions.
Namely, we cannot expect the transport density $v$ to be a continuous
function as in the case of the open table problem.
This fact has several consequences.

First of all, we cannot give a pointwise meaning to the boundary
conditions in (\ref{f:prob2}).
Our choice here it to set $v$ in $L^1(\Omega)$, and to consider
a weak formulation of the problem
(see (\ref{f:prob}) below).
Another possible choice, which we do not pursue here,
could be to set $v$ in the class of functions with bounded variation,
so that the trace of $v$ on $\partial\Omega$ is well defined.
Our main result (see Theorem~\ref{t:exi} below) states that
there exists a nonnegative function $\vf\in L^1(\Omega)$,
of which we give an explicit integral representation,
such that $(\dg, \vf)$ is a solution to the weak formulation
of the problem, where $\dg$ denotes the distance function from $\Gamma$
(see (\ref{f:dg}) for its precise definition).

The second major consequence of the lack of continuity of $v$
concerns the uniqueness of solutions.
Namely, the uniqueness of the transport density $v$ for the open table
problem was proved in \cite{CaCa} using a blow-up argument,
first introduced in \cite{EGan} for the analysis of
mass transport problems in the framework of the Monge-Kantorovich theory,
that relies on the continuity of $v$.
In our opinion this argument cannot be adapted to the case
$v\in L^1$.
Nevertheless, it could be possible to prove a uniqueness result
in a restricted class of more regular functions
$\mathcal{F}\subset L^1(\Omega)$, provided that
one can show that $\vf\in\mathcal{F}$.

\medskip
A precise formulation of the existence result mentioned above
requires some notation.
The table $\Omega\subset\R^2$ will be a
Lipschitz domain, i.e.\ an open bounded connected set with Lipschitz boundary,
and the open boundary
$\Gamma\subset\partial\Omega$ will be a nonempty closed subset of $\partial\Omega$.
Let $\dist\colon\overline{\Omega}\times\overline{\Omega}\to [0,\infty)$
denote the path distance in $\overline{\Omega}$, defined by
\begin{equation}\label{f:pathd}
\dist(x,y) = \inf\{\textrm{length}(\gamma);\
\gamma\colon[0,1]\to\overline{\Omega}\
\textrm{Lipschitz path joining $x$ to $y$}\}\,,
\end{equation}
and let
\begin{equation}\label{f:Lipu}
\Lipu =
\{u\colon\overline{\Omega}\to\R;\
u\ \textrm{Lipschitz},\
u(x)-u(y)\leq \dist(x,y)\
\forall x,y\in\overline{\Omega}\}
\end{equation}
be the set of $1$-Lipschitz functions in $\overline{\Omega}$ with
respect to the path-metric $\dist$.
Let us denote by $\dg\colon\overline{\Omega}\to [0,\infty)$ the path distance
function from $\Gamma$, defined by
\begin{equation}\label{f:dg}
\dg(x) = \inf_{y\in\Gamma} \dist(x,y),\qquad
x\in \overline{\Omega}\,.
\end{equation}
It is easily seen that, if $\Gamma = \partial\Omega$, then
$\dg$ is the Euclidean distance from the boundary of $\Omega$.
Moreover, $\dg$ belongs to the space of functions
\begin{equation}\label{f:Lipug}
\Lipug =
\{u\in\Lipu;\ u = 0\ \text{on\ }\Gamma
\}\,.
\end{equation}
It is also known that $\dg$ is the maximal function among all
functions in $\Lipug$.
Thus $\dg$ is the maximal size of the standing layer.

Given a nonnegative function $f\in L^1(\Omega)$, let
$\uf\colon\overline{\Omega}\to [0,\infty)$ denote the function
defined by
\begin{equation}\label{f:uf}
\uf(x) = \max\{ G_z(x);\ z\in\spt(f)\}\qquad
x\in \overline{\Omega}\,,
\end{equation}
where, for every $x, z\in\overline{\Omega}$,
\begin{equation}\label{f:G}
G_z(x) =
\begin{cases}
\dg(z)-d(z,x), & \textrm{if $d(z,x)\leq \dg(z)$},\\
0, &\textrm{otherwise}\,,
\end{cases}
\end{equation}
and $\spt(f)$ denotes the (essential) support of $f$,
that is,
the complement in $\overline{\Omega}$ of the union of all
relatively open subsets $A\subseteq\overline{\Omega}$
such that $f=0$ a.e.\ in $A$.

It is readily seen that the graph of the positive part of $G_z$ is,
in the path metric,
the maximal cone of unitary slope, with apex in $z$,
whose base is contained in $\overline{\Omega}$
and touches $\Gamma$ at some point.
It is clear that $0\leq \uf\leq \dg$,
and that $\uf\in\Lipu$.
Moreover, since $\uf$ is the sup-envelope of all the cones $G_z$ with
$z\in\spt(f)$, it is plain that $\uf$ represents the minimal
standing layer for an equilibrium configuration with respect to the given support of the source.

We can now state the weak formulation
of problem (\ref{f:prob2}):
Find nonnegative functions
$u,v\colon\overline{\Omega}\to [0,+\infty)$
such that
\begin{equation}\label{f:prob}
\begin{cases}
u\in\Lipug,\ v\in L^1(\Omega),
&u,v\geq 0,\\
{\displaystyle
\int_{\Omega} v \pscal{Du}{D\phi} =
\int_{\Omega} f\,\phi}
&\forall\phi\in\Cc\,,\\
|Du| = 1
&\text{a.e.\ in } \{v>0\},
\end{cases}
\end{equation}
where $\Cc$ denotes the space of
$C^{\infty}$ functions $\phi\colon\R^2\to\R$ with compact
support in $\R^2\setminus\Gamma$.
In this weak formulation the boundary conditions on $u$ and $v$ are embedded in the
choice of the test functions space $\Cc$ and
in the condition $u\in\Lipug$.
It is readily seen that, if $u$ and $v$ are smooth enough, then problem
(\ref{f:prob}) is equivalent to (\ref{f:prob2}).


\medskip
The plan of the paper is the following.
In Section~\ref{s:exi} we state the main existence result,
constructing explicitly a transport density $\vf$
(see formula (\ref{f:vf}) below)
and showing that a pair $(u,\vf)$ is a solution
to problem (\ref{f:prob}) for every
$u\in\Lipug$ satisfying $\uf\leq u\leq\dg$.
Moreover, we show that no other function $u\in\Lipug$
can be the standing layer for the problem.
Section~\ref{s:proof} contains the proofs
of these results,
that are mainly based on a Change of Variables formula
of some independent interest
(see Theorem~\ref{t:cvar} below).
In Section~\ref{s:test}
we compute the explicit solution in a simple case,
which will be compared in Section~\ref{s:num} to the numerical
equilibrium solution of the dynamical model (\ref{f:HK})
obtained via some finite difference schemes.
Indeed, in this last section the specific difficulties of such a
numerical approximation are discussed in details.

\section{Existence of a solution}
\label{s:exi}

Throughout this paper, $\Omega\subset\R^2$
and $\Gamma\subset\partial\Omega$ satisfy the following assumptions.
\begin{itemize}
\item[(H1)]
$\Omega\subset\R^2$ is a Lipschitz domain, i.e.\
a nonempty open bounded connected set with Lipschitz boundary.
\item[(H2)] $\Gamma = \bigcup_{i=1}^{N} \Gamma_i$
is a nonempty closed subset of $\partial\Omega$,
with $\Gamma_1,\ldots,\Gamma_N$
connected arcs of $\partial\Omega$,
pairwise disjoint (up to the endpoints) and of class $C^{2}$.
We denote by $A_i$, $B_i$ the endpoints of the
arc $\Gamma_i$, $i=1,\ldots,N$, and by
$\Gamma^e$ the collection of all these endpoints.
\item[(H3)]
For every $x\in\overline{\Omega}$ there exists $y\in\Gamma$ such that
$\dg(x) = |x-y|$.
\end{itemize}

\begin{rk}\label{r:A}
Observe that (H2) does not prevent the intersection of two arcs
at the endpoints.
This is the case, for example, if $\Omega$ is a square and
$\Gamma_i$, $i=1,\ldots,4$, its sides.
Nevertheless, in order to simplify
the proofs, in the following we assume that the arcs
$\Gamma_i$ do not intersect at the endpoints.
The general case can be treated with minor modifications.
\end{rk}

Condition (H3) says that, for every $x\in\overline{\Omega}$,
there is a point $y\in\Gamma$ such that the closed segment $\cray{x,y}$
with endpoints $x$ and $y$
is a path of minimal length joining $x$ to $\Gamma$.

There are three relevant cases where these conditions are satisfied:
\begin{itemize}
\item[(a)] $\Gamma = \partial\Omega$, and $\Omega$ is a domain of class $C^{2}$.
\item[(b)] $\Gamma = \partial\Omega$, and $\Omega$ is a domain with piecewise
$C^{2}$ boundary.
\item[(c)] $\Omega$ is a non-empty open bounded convex set,
and $\Gamma$ satisfies condition (H2) above.
\end{itemize}

Case (a) and (b) refer to the open table problem.
They were considered respectively
in \cite{CaCa} (see also \cite{CCCG}
for an extension to $\Omega\subset\R^n$)
and \cite{Gio}
in the case of piecewise $C^{2,1}$ boundary with outer (i.e.\ ``convex'') corners.

Let $\Gamma^* = \Gamma\setminus\Gamma^e$.
For every $y\in\Gamma^*$ let $\norm(y)$ denote the inward unit
normal vector of $\partial\Omega$, and
let $\curv(y)$ denote the curvature
of $\partial\Omega$ at $y$.

For every $x\in\overline{\Omega}$ we denote by
\[
\proj(x) = \{y\in\Gamma;\ \dg(x) = d(x,y)\}
\]
the set of all projections of $x$ on $\Gamma$.
By (H3) it is clear that
\[
\proj(x) = \{y\in\Gamma;\ \dg(x) = |x-y|\}\,.
\]

For every $x\in\overline{\Omega}$ and every $y\in\proj(x)$, let us define
\begin{gather}
\le(x) = \dg(x)\cdot
\sup\{t>0;\ y+t(x-y)\in\Omega\ \text{and}\
y\in\proj(y+t(x-y))\},\label{f:lx}\\
m(x) = y + \le(x)\,\frac{x-y}{|x-y|},\label{f:mx}\\
R_x =\oray{y, m(x)}\,,
\end{gather}
where $\oray{x,y}$ denotes the segment joining $x$ to $y$
without the endpoints.
The set $R_x$ is called a \textsl{distance ray} (or \textsl{transport ray})
through $x$ and,
in general, it depends on the projection point $y$.
On the other hand, it is not difficult to prove that $m(x)$ and
$\le(x)$ do not depend on the choice of $y\in\proj(x)$.
It is readily seen that $\le(x)$ is the length of the transport ray $R_x$.
The set
\begin{equation}\label{f:ridge}
\rid = \{x\in\overline{\Omega};\ \dg(x) = \le(x)\}
\end{equation}
will be called the \textsl{extended ridge} of $\Omega$
(see Figure~\ref{fig:rid}).
It coincides with the usual definition of ridge when
$\Gamma = \partial\Omega$
(see \cite{EH}).
Finally let $\tau\colon\overline{\Omega}\to [0,\infty)$ denote the
\textsl{normal distance to the ridge}, defined by
\begin{equation}\label{f:tau}
\tau(x) = \le(x) - \dg(x),\qquad
x\in\overline{\Omega}\,.
\end{equation}
This function is clearly bounded from above by
$\max\{\dg(x);\ x\in\overline{\Omega}\}$.

\begin{figure}
\includegraphics[height=4cm]{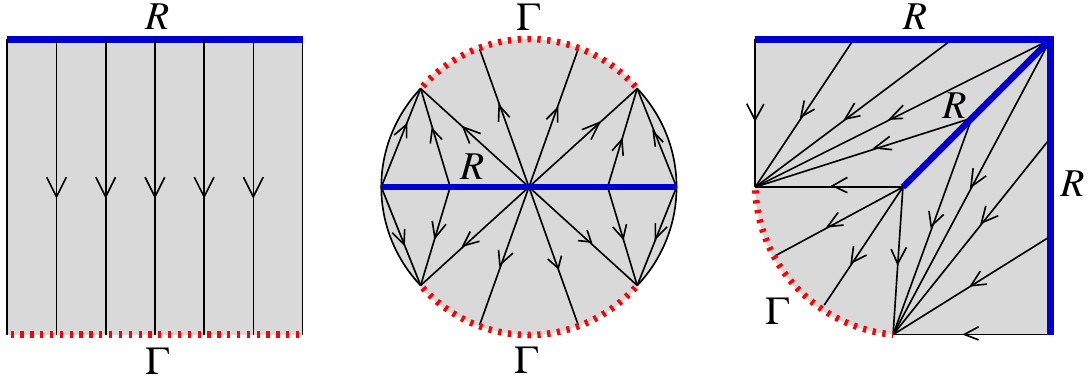}
\caption{Open boundary $\Gamma$ (dotted), extended ridge $\rid$ (bold) and transport rays}
\label{fig:rid}
\end{figure}

Let $\Omega^*\subset\Omega$ be the set of
regular points of $\dg$, that is
the set of all $x\in\Omega$
such that the projection $\proj(x)$ is a singleton.
It is well-known that $\meas(\Omega\setminus\Omega^*) = 0$.

Let $x\in\Omega^*$, let $\proj(x) = \{y\}$, and define
for every $t\in [0,\tau(x)]$
\begin{equation}\label{f:Mx}
M_x(t) =
\begin{cases}
\dfrac{\dg(x)+t}{\dg(x)},
&\textrm{if $y\in\Gamma^e$},\\
\dfrac{1-(\dg(x)+t)\curv(y)}{1-\dg(x)\curv(y)},
&\textrm{if $y\in\Gamma^*$}\,.
\end{cases}
\end{equation}

Let us define the function
\begin{equation}\label{f:vf}
\vf(x) =
\begin{cases}
\displaystyle
\int_0^{\tau(x)} f(x+t\, D\dg(x))\, M_x(t)\, dt\,,
&\textrm{if $x\in\Omega^*$},\\
0,
&\textrm{if $x\in\Omega\setminus\Omega^*$}\,.
\end{cases}
\end{equation}
We remark that,
if $\oray{y,z}$ is a transport ray, then $z\in\rid$, and
\[
\lim_{x\to z,\ x\in\oray{y,z}} \vf(x) = 0\,.
\]
In other words, the transport density $\vf$ vanishes at the end
of each transport ray.

The main theoretical contribution of this paper is the
following existence result.

\begin{theorem}[Existence]\label{t:exi}
Let $\Omega$ and $\Gamma$ satisfy (H1)-(H3).
Then the pair $(u, \vf)$ is a solution of
(\ref{f:prob}) for every
$u\in\Lipug$ satisfying $\uf\leq u\leq\dg$.
Furthermore,
if $(u,v)$ is any solution of (\ref{f:prob}),
then $\uf\leq u\leq\dg$.
\end{theorem}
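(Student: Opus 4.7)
The plan is to split the statement into its two halves: the existence of a solution of the form $(u,\vf)$ whenever $\uf\le u\le\dg$, and the necessity of these two bounds for any solution. The central tool is the ray-decomposition provided by Theorem~\ref{t:cvar}, which reduces the two-dimensional weak PDE to a one-dimensional integration by parts along each transport ray. Throughout I would work on $\Omega^*$, the full-measure set of regular points of $\dg$.

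For existence I check three items. First, $\vf\in L^1(\Omega)$ follows from Theorem~\ref{t:cvar} applied to \eqref{f:vf} together with Fubini. Second, the slope condition $|Du|=1$ a.e.\ on $\{\vf>0\}$ is derived from the pointwise identity $\uf=\dg$ on $\{\vf>0\}\cap\Omega^*$: if $\vf(x)>0$ at a regular point $x$, then \eqref{f:vf} forces the open transport ray through $x$ to meet $\spt(f)$ at some $z$ strictly downstream of $x$; since $x$ lies on a minimising path from $\proj(x)$ to $z$, additivity along the ray gives $\dg(z)=\dg(x)+d(x,z)$, so $G_z(x)=\dg(x)$ and $\uf(x)\ge\dg(x)$. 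Combined with $\uf\le u\le\dg$ this forces $u=\dg$ on $\{\vf>0\}\cap\Omega^*$, and hence $|Du|=|D\dg|=1$ a.e.\ there. Third, for the divergence identity I would parametrise $\Omega^*$ by $\Psi_y(s)=y+s\norm(y)$ for $y\in\Gamma^*$ (Jacobian $J_y(s)=1-s\curv(y)$) and by the polar map $\Psi_y(s)=y+s\omega$ for $y\in\Gamma^e$ (Jacobian $J_y(s)=s$). The weight $M_x$ in \eqref{f:vf} is precisely the ratio of these Jacobians between two points of the same ray, so \eqref{f:vf} becomes
\begin{equation*}
\vf(\Psi_y(s))\,J_y(s)=\int_s^{\le(y)} f(\Psi_y(r))\,J_y(r)\,dr.
\end{equation*}
Since $u=\dg$ on $\{\vf>0\}$, we have $D\dg\cdot D\phi=\partial_s(\phi\circ\Psi_y)$ on each ray. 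An integration by parts in $s$ kills both boundary terms: at $s=0$ because $\phi$ has compact support in $\R^2\setminus\Gamma$, and at $s=\le(y)$ because the displayed identity forces $\vf J_y$ to vanish at the ridge. The remaining integrand is $f\,\phi\,J_y$, and reassembling via Theorem~\ref{t:cvar} returns $\int_\Omega f\phi$, as required.

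For necessity, $u\le\dg$ is immediate because $\dg$ is the maximal element of $\Lipug$. For $u\ge\uf$, by the cone structure of $\uf$ and the $1$-Lipschitz property of $u$, it suffices to show $u(z)=\dg(z)$ for every $z\in\spt(f)$: then $u(x)\ge u(z)-d(z,x)\ge\dg(z)-d(z,x)=G_z(x)$ on the support of $G_z$, so $u\ge\uf$ upon taking the supremum over $z\in\spt(f)$. I would establish this equality by contradiction: assuming $u(z_0)<\dg(z_0)$, continuity yields an open neighbourhood $U\Subset\Omega\setminus\Gamma$ of $z_0$ and $\eta>0$ with $u<\dg-\eta$ on $U$; testing \eqref{f:prob} against non-negative bumps supported in $U$, and combining $|Du|\le 1$ with $|Du|=1$ on $\{v>0\}$, should yield a flux-balance estimate incompatible with $\int_U f>0$.

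I expect two genuinely delicate steps. The principal one is the change-of-variables computation used in existence: the smooth rays emanating from $\Gamma^*$, whose Jacobian $1-s\curv$ can degenerate along the ridge, must be glued coherently to the radial rays from the endpoints of $\Gamma^e$, whose Jacobian is $s$; the precise shape of $M_x$ in \eqref{f:vf} is dictated by exactly this compatibility, which is the content of Theorem~\ref{t:cvar}. The second subtle point is the lower bound $u\ge\uf$: the blow-up technique of \cite{EGan,CaCa} relies on continuity of the transport density, which is no longer available since $\vf$ is only $L^1$, so the contradiction has to be extracted directly from the weak formulation and the slope constraints.
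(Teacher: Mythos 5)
Your existence half follows the paper's own route almost step for step: the ray decomposition and Change of Variables formula of Theorem~\ref{t:cvar}, the identification of the weight $M_x$ in (\ref{f:vf}) with the Jacobian ratio along rays, the integration by parts in the ray parameter with the two boundary terms killed exactly as you describe, and the reduction from a general admissible $u$ to $\dg$ through the identity $u=\dg$ (hence $Du=D\dg$ a.e.) on $\{\vf>0\}$ — this last point is the paper's Lemma~\ref{l:uf}, which invokes \cite[Prop.~4.2]{Amb} for the a.e.\ equality of the gradients on the coincidence set; you use that fact implicitly, but it is standard. The same is true of the integrability of $\vf$ and of $u\leq\dg$ by maximality of $\dg$ in $\Lipug$.

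The genuine gap is in the lower bound $u\geq\uf$. Your reduction to proving $u=\dg$ on $\spt(f)$ via cone comparison is fine, but the contradiction you sketch — testing against nonnegative bumps supported in a neighbourhood $U\Subset\Omega\setminus\Gamma$ where $u<\dg-\eta$ — cannot work as a purely local argument. The information you retain on $U$ (namely $-\dive(v\,Du)=f$ distributionally in $U$, $|Du|\leq 1$, $|Du|=1$ a.e.\ on $\{v>0\}$) is locally compatible with $u<\dg-\eta$: for instance $u$ can locally be a unit cone with apex inside $U$ and $v$ obtained by integrating $f$ outward along its rays, the source mass simply exiting through $\partial U$; no local flux balance is violated, so no contradiction with $\int_U f>0$ can be extracted from bumps alone. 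What forbids $u<\dg$ on $\spt(f)$ is global: the mass must be carried all the way to $\Gamma$ while $u$ descends with slope at most $1$ and vanishes on $\Gamma$. The paper encodes this in Lemma~\ref{l:max}: one first extends the admissible test functions from $\Cc$ to all Lipschitz $\phi$ vanishing on $\Gamma$ (an approximation step you would also need), then tests with $\phi=w-u$ for an arbitrary $w\in\Lipug$ and uses $v\geq 0$, $|Dw|\leq 1$, $|Du|=1$ a.e.\ on $\{v>0\}$ to get $\int_\Omega v\,\pscal{Du}{(Dw-Du)}\leq 0$, hence $\int_\Omega f\,u\geq\int_\Omega f\,w$ for every $w\in\Lipug$. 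Choosing $w=\dg$ and observing that $u<\uf$ somewhere forces $u<\dg$ on a set of positive measure where $f>0$ (the paper quotes \cite{CMi} for this, but your cone argument plus continuity also gives it) yields $\int_\Omega f\,u<\int_\Omega f\,\dg$, the desired contradiction. So the necessity part of your proposal needs this global variational comparison; as written, the ``flux-balance estimate incompatible with $\int_U f>0$'' does not exist at the local level.
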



It is apparent that the result above characterizes all
the possible standing layers as the functions
$u\in\Lipug$ satisfying $\uf\leq u\leq\dg$.
On the other hand, the uniqueness of the transport density
$\vf$ remains an open problem.

\section{Proof of Theorem~\ref{t:exi}}
\label{s:proof}

In what follows we shall always use the following decomposition of the set $\Omega^*$
of regular points of $\dg$.
For every $i=1,\ldots,N$, let us define the sets
\begin{equation}\label{f:decomp}
\begin{split}
\Omega_i^* & = \{x\in\Omega^*;\ \proj(x)\in\Gamma_i^*\},\\
\Omega_i^A & = \{x\in\Omega^*;\ \proj(x) = \{A_i\}\},\quad
\Omega_i^B = \{x\in\Omega^*;\ \proj(x) = \{B_i\}\}\,.
\end{split}
\end{equation}
Then (see Fig. \ref{Fig:0} for an example)
\[
\Omega^*= \bigcup_{i=1}^N \left( \Omega_i^* \cup \Omega^A_i \cup
\Omega^B_i\right)\ .
\]

\begin{figure}
\includegraphics[height=4cm]{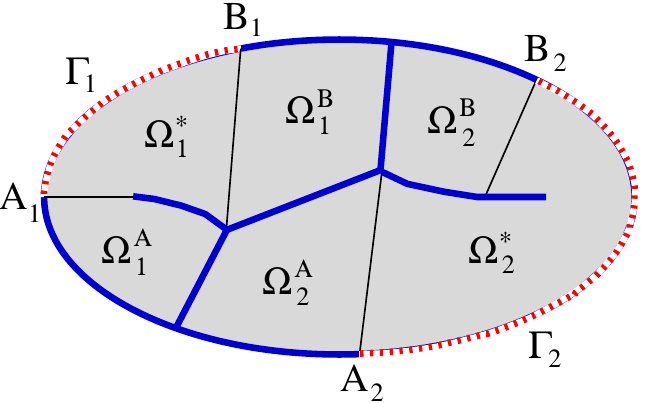}
\caption{An example of domain decomposition} \label{Fig:0}
\end{figure}

Next, we need an approximation of this decomposition
in terms of sets that can be easily parameterized
(see Theorem~\ref{t:ridge} below).
For every $\epsilon > 0$ let us define the sets
\begin{equation}\label{f:Gie}
\Gie = \left\{x\in\R^2;\ \min_{y\in\Gamma_i} |x-y| < \epsilon\right\},\quad
\Gaie = \partial\Gie,\quad
\qquad
i=1,\ldots,N.
\end{equation}
{}From Remark~\ref{r:A} and (H3), there exists $r>0$ with the following property:
for every $\epsilon\in (0,r]$, the sets
$\overline{\Gie}$ are pairwise disjoint and of class $C^{1,1}$.
Given $\epsilon\in (0,r]$ and $i=1,\ldots,N$, for every $y\in\Gaie$
let $\nie(y)$ denote the \textsl{outer} (with respect to $\Gie$)
unit normal vector to $\Gaie$ at $y$.
In the following, $\epsilon$ will always denote
a number in $(0,r]$.

Let us define the maps
\begin{equation}\label{f:Psi}
\Phie\colon\Gaie\times\R\to\R^2,\quad
\Phie(y,t) = y + (t-\epsilon)\,\nie(y),\quad
y\in\Gaie,\ t\in\R,
\quad
i=1,\ldots,N.
\end{equation}

The following lemma states that $\Omega$ can be parameterized using
the maps $\Phie$. Moreover, this parametrization is independent of the choice
of $\epsilon$.
Let us define
\[
\Gtie = \Gaie\cap\Omega,\quad
i=1,\ldots,N,\qquad
\Gamma^{\epsilon} = \bigcup_{i=1}^N \Gaie,\quad
\Gte = \bigcup_{i=1}^N \Gtie,
\]
and denote by $\proje = \Pi_{\Gamma^{\epsilon}}$
the projection operator on $\Gamma^{\epsilon}$.

\begin{lemma}\label{l:proj}
Let $x\in\Omega$ and let $y\in\proj(x)$.
If $y\in\Gamma_i$, then
for every $\epsilon\in (0,r)$
there exists a unique $\ye\in\Gaie$ such that
$\ye\in\proje(x)$ and $x = \Phie(\ye, \dg(x))$.
Moreover,
$x\in \oray{y,\ye}$, if $\dg(x) < \epsilon$, and
$\ye\in \oray{y, x}$, $\ye\in\Gtie$, if $\dg(x)>\epsilon$.
\end{lemma}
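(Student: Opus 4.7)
The plan is to construct $y_\epsilon$ explicitly and then verify, in turn: that $y_\epsilon\in\Gaie$ with $\nie(y_\epsilon)$ in the expected direction; the identity $x=\Phie(y_\epsilon,\dg(x))$; the projection property $y_\epsilon\in\proje(x)$; the positional statements relative to $\oray{y,x}$; and uniqueness. The candidate is
\[
y_\epsilon := y + \epsilon\,\theta, \qquad \theta := \frac{x-y}{|x-y|},
\]
and the key geometric input is that, by (H3), $\dg(x)=|x-y|$ equals the path distance $d(x,y)$; a standard Arzel\`a--Ascoli argument on a minimizing sequence of Lipschitz paths in $\overline{\Omega}$ then shows that the Euclidean segment $[x,y]$ lies in $\overline{\Omega}$ and realises the path distance.

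First I would verify $y_\epsilon\in\Gaie$ with outer normal $\nie(y_\epsilon)=\theta$. If $y\in\Gamma_i^*$, the segment $[x,y']$ remains in $\overline{\Omega}$ for $y'\in\Gamma_i$ close to $y$, so $y'\mapsto|x-y'|$ has a local minimum at $y$ along $\Gamma_i$; the first-order condition forces $\theta$ to be the inward unit normal to $\Gamma_i$ at $y$, and the $C^{1,1}$ regularity of $\overline{\Gie}$ (for $\epsilon\leq r$) guarantees that $y$ is the unique Euclidean nearest point of $\Gamma_i$ to $y_\epsilon$. If $y$ is an endpoint of $\Gamma_i$, a one-sided tangent-cone inequality places $y_\epsilon$ on the half-circular piece of $\Gaie$ around $y$, again with $y$ as its unique Euclidean projection on $\Gamma_i$. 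With this in place, the identity
\[
\Phie(y_\epsilon,\dg(x)) = y_\epsilon+(\dg(x)-\epsilon)\theta = y+\dg(x)\,\theta = x
\]
is immediate, and the projection property is a triangle-inequality check: given $z\in\Gamma^\epsilon$, pick $w\in\Gamma$ with $|z-w|=\epsilon$, so $|x-z|\geq\bigl||x-w|-\epsilon\bigr|\geq|\dg(x)-\epsilon|=|x-y_\epsilon|$.

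The positional claims come directly from $y_\epsilon-y=\epsilon\theta$ and $x-y=\dg(x)\,\theta$: for $\dg(x)<\epsilon$ the point $x$ lies strictly between $y$ and $y_\epsilon$ on that ray, whereas for $\dg(x)>\epsilon$ it is $y_\epsilon$ that lies strictly between; in the latter case the open segment $\oray{y,x}$ is in $\Omega$ (a boundary contact would either yield a point of $\Gamma$ closer than $y$, contradicting $y\in\proj(x)$, or a wall point incompatible with (H3)), whence $y_\epsilon\in\Gtie$. Uniqueness follows because $\Phie$ is a local diffeomorphism on $\Gaie\times(-r,r)$, so the equation $x=\Phie(y_\epsilon,\dg(x))$ pins down $y_\epsilon$. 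The main obstacle is the first step: translating the path-distance minimality of $y$ for $x$ into the Euclidean projection statement for $y_\epsilon$ on $\Gamma_i$, which is delicate when $y$ is an endpoint and requires combining the tangent-cone structure of $\Gamma_i$ at $y\in\Gamma^e$ with the $C^{1,1}$ reach estimate provided by the choice of $r$.
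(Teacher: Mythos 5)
Your construction is exactly the paper's: the published proof takes $\ye$ to be the intersection of $\Gaie$ with the ray from $y$ through $x$ (your $y+\epsilon\theta$) and leaves the verification as ``straightforward to check'', so in outline you follow the same route, and your normality argument for $\theta$ at $y$, the identity $\Phie(\ye,\dg(x))=x$, and the positional statements are fine. Two of your verifications, however, do not hold as written. The projection check: from $z\in\Gamma^{\epsilon}$ and $w\in\Gamma$ with $|z-w|=\epsilon$ you infer $|x-z|\geq\bigl||x-w|-\epsilon\bigr|\geq|\dg(x)-\epsilon|$, but the second inequality fails precisely in the regime $\dg(x)<\epsilon$ (which the lemma explicitly covers): all you know is $|x-w|\geq\dg(x)$, and any foot point $w$ with $\dg(x)<|x-w|<2\epsilon-\dg(x)$ makes $\bigl||x-w|-\epsilon\bigr|<\epsilon-\dg(x)=|x-\ye|$, so the chain proves nothing there. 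The correct estimate uses the $1$-Lipschitz functions $\mathrm{dist}(\cdot,\Gamma_j)$: for $z\in\partial G_j^{\epsilon}$ one has $|x-z|\geq|\mathrm{dist}(x,\Gamma_j)-\epsilon|$, which for $j=i$ gives exactly $|\dg(x)-\epsilon|$ since $\mathrm{dist}(x,\Gamma_i)=\dg(x)$, while for $j\neq i$ one must invoke the pairwise disjointness of the closed tubes up to radius $r$ (so $\mathrm{dist}(\Gamma_i,\Gamma_j)\geq 2r\geq 2\epsilon$, hence $\mathrm{dist}(x,\Gamma_j)\geq 2\epsilon-\dg(x)$ and $|x-z|\geq\epsilon-\dg(x)$). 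Your argument never uses this separation, and without it the desired inequality can genuinely fail when $x$ sits inside the tube.

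The uniqueness step is also off target: you appeal to $\Phie$ being a local diffeomorphism on $\Gaie\times(-r,r)$, but the equation to be solved is $x=\Phie(\ye,\dg(x))$ with $t=\dg(x)$, which is in general much larger than $r$, and $y\mapsto\Phie(y,t)$ is not injective for large $t$ --- normal rays cross on the extended ridge (for $\Omega$ a disc and $x$ its centre, every point of $\Gamma^{\epsilon}\cap\Omega$ solves the equation). Uniqueness has to be extracted from the conjunction of the two conditions (the nearest-point property on $\Gamma^{\epsilon}$ together with the normal relation forces $\ye$ onto a segment realizing $\dg(x)$ through the chosen $y$), not from injectivity of $\Phie(\cdot,\dg(x))$. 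Finally, your silent use of $|x-w|\geq\dg(x)$ for all $w\in\Gamma$ (Euclidean distance to $\Gamma$ equals $\dg$) is consistent with the paper's own reading of (H3), so no complaint there; but the two points above are genuine gaps in the verification you set out to supply.
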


\begin{proof}
Let $\ye$ be the intersection of $\Gaie$ with
the ray starting from $y$ and passing through $x$.
{}From the definition of $\Gaie$ it is straightforward to check that
$\ye$ satisfies all the stated properties.
\end{proof}


Let $\ye\in\Gte$, and let $y\in\Gamma$ be the unique projection of $\ye$
on $\Gamma$.
Let us define
\begin{equation}\label{f:ke}
\ke(\ye) =
\begin{cases}
-1/\epsilon,
&\textrm{if $y\in\Gamma^e$},\\
\kappa(y)/(1-\epsilon\, \kappa(y)),
&\textrm{if $y\in\Gamma^*$}\,.
\end{cases}
\end{equation}
It can be checked that $\ke(\ye)$ is the curvature of
$\Gamma^{\epsilon}$ at every point $\ye$ where the curvature
is defined.
We remark that, for every $i=1,\ldots,N$, $\ke$ is defined at all but four points
of $\Gte_i$.
Let $x\in\Omega^*$, let $\proje(x) = \{\ye\}$,
and define
\begin{equation}\label{f:Mxe}
M_x^{\epsilon}(t) =
\dfrac{1-(\dg(x)+t-\epsilon)\ke(\ye)}{1-(\dg(x)-\epsilon)\ke(\ye)},
\qquad
t\in [0,\tau(x)]\,.
\end{equation}
It is easy to check that, if $\proj(x) = \{y\}$, then
\begin{equation}\label{f:Mxeug}
M_x^{\epsilon}(t) = M_x(t),
\qquad
\forall t\in [0,\tau(x)].
\end{equation}

\begin{figure}
\includegraphics[height=4cm]{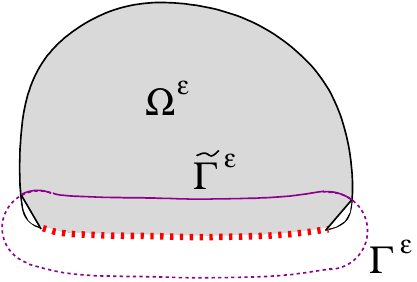}
\caption{The set $\Omega_{\epsilon}$}
\label{fig:oe}
\end{figure}

The main tool needed for the proof of Theorem~\ref{t:exi} is a
Change of Variables formula (see Theorem~\ref{t:cvar} below).
As a first step, we need the following approximation result
(see Figure~\ref{fig:oe}).

\begin{theorem}\label{t:ridge}
For every $\epsilon\in (0,r]$ and $i=1,\ldots,N$, let
\[
\Omega_i^{\epsilon} = \{\Phie(y,t);\ y\in\Gte,\ 0<t<\le(y)\},\qquad
\Omega^{\epsilon} = \bigcup_{i=1}^N \Omega_i^{\epsilon}\,.
\]
Then $\Omega^{\epsilon}\subset\Omega$, and
$\lim_{\epsilon\to 0} \meas(\Omega\setminus\Omega^{\epsilon}) = 0$.
\end{theorem}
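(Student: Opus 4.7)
The plan is to split the proof into two independent parts: the pointwise inclusion $\Omega^{\epsilon}\subset\Omega$, and the measure limit $\meas(\Omega\setminus\Omega^{\epsilon})\to 0$.

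For the inclusion, fix $i$ and $y\in\Gtie$, and let $y_0\in\proj(y)\cap\Gamma_i$. Since $\Gaie$ is a level set of $\dg$ and $\nie(y)=D\dg(y)$ points in the direction of the transport ray through $y$ away from $\Gamma$, one has $y-y_0=\epsilon\,\nie(y)$. Substituting into (\ref{f:Psi}) gives
\[
\Phie(y,0)=y_0,\qquad \Phie(y,\epsilon)=y,\qquad \Phie(y,\le(y))=m(y),
\]
and $t\mapsto\Phie(y,t)$ linearly parametrizes the closed segment $\cray{y_0,m(y)}$. By the very definition of $\le$ and $m$, the open segment $\oray{y_0,m(y)}$ is a transport ray contained in $\Omega$; hence $\Phie(y,t)\in\Omega$ for every $t\in(0,\le(y))$, proving $\Omega^{\epsilon}\subset\Omega$.

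For the measure claim, use the decomposition $\Omega^*=\bigcup_i(\Omega_i^*\cup\Omega_i^A\cup\Omega_i^B)$ from (\ref{f:decomp}), together with $\meas(\Omega\setminus\Omega^*)=0$ and the standard fact that the extended ridge $\rid$ has zero Lebesgue measure (which follows from a Fubini/co-area argument along the fibres of $\proj$). It then suffices to show that every $x\in\Omega^*\setminus\rid$ belongs to $\Omega^{\epsilon}$ for all sufficiently small $\epsilon$. If $x\in\Omega_i^*\setminus\rid$, so that $0<\dg(x)<\le(x)$, then for any $\epsilon\in(0,\dg(x))$ Lemma~\ref{l:proj} produces a unique $\ye\in\Gtie$ on the transport ray through $x$ with $x=\Phie(\ye,\dg(x))$; since $\le(\ye)=\le(x)>\dg(x)$, we get $x\in\Omega_i^{\epsilon}$. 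The cases $x\in\Omega_i^A$ and $x\in\Omega_i^B$ are handled analogously, once one observes that, for $\epsilon$ small enough, the circular arc of $\Gaie$ consisting of the points projecting onto $A_i$ (resp.\ $B_i$) is contained in $\Omega$, hence in $\Gtie$, and thus parametrizes the fan of transport rays emanating from the endpoint.

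The measure limit then follows by dominated convergence: $\mathbf{1}_{\Omega\setminus\Omega^{\epsilon}}(x)\to 0$ pointwise as $\epsilon\to 0$ on the full-measure set $\Omega^*\setminus\rid$, and is bounded by $1$ on the bounded domain $\Omega$. The main obstacle I anticipate is precisely the endpoint analysis: verifying that for $\epsilon$ sufficiently small the appropriate arc of $\Gaie$ surrounding each $A_i$ or $B_i$ does lie inside $\Omega$, and that $\Phie$ restricted to this arc surjectively parametrizes the transport rays from that endpoint. This relies on the Lipschitz regularity of $\partial\Omega$ from (H1), the non-intersection convention of Remark~\ref{r:A}, and hypothesis (H3), and is essentially what restricts the scope of the result to the three concrete geometric settings (a)--(c) listed before the theorem.
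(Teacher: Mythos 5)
Your argument is correct in substance, but it reaches the measure statement by a genuinely different route than the paper. The paper works with the slightly larger set $O^{\epsilon}=\bigcup_i\{\Phie(y,t);\ y\in\Gtie,\ 0<t\leq\le(y)\}$, observes (via Lemma~\ref{l:proj}) that $\Omega^{\epsilon}\subset O^{\epsilon}\subset\Omega$ and that $O^{\epsilon}$ misses at most a set of measure $N\pi\epsilon^2$, and then disposes of $O^{\epsilon}\setminus\Omega^{\epsilon}$ by noting it sits inside the set of ray endpoints $D\subseteq\rid$, which is Lebesgue-null by \cite{CMf}, Corollary~6.8; this gives the explicit rate $\meas(\Omega\setminus\Omega^{\epsilon})\leq N\pi\epsilon^2$. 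You instead prove pointwise that every $x\in\Omega^*\setminus\rid$ lies in $\Omega^{\epsilon}$ as soon as $\epsilon<\dg(x)$ (using Lemma~\ref{l:proj} and the constancy of $\le$ along a transport ray, which also covers the endpoint fans $\Omega_i^A$, $\Omega_i^B$ with no extra work, since the lemma already places $\ye$ in $\Gtie$ when $\dg(x)>\epsilon$), and conclude by dominated convergence. Your approach is softer and avoids the covering estimate, at the price of losing the quantitative rate; the paper's approach is uniform in $x$ and quantitative. Two cautions. First, the negligibility of $\rid$ is exactly the nontrivial ingredient here and does not follow from a naive Fubini along the fibres of $\proj$ (the rays are not parallel, and a set meeting each ray once can a priori have positive measure); the correct justification is the Lipschitz change of variables $\Phie$ on the $C^{1,1}$ sets $\Gaie$, i.e.\ precisely the result the paper cites from \cite{CMf}, so you should cite it rather than wave at coarea. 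Second, in your inclusion step the identities $\dg(y)=\epsilon$, $y-y_0=\epsilon\,\nie(y)$ and $\nie(y)=D\dg(y)$ for $y\in\Gtie$ are not literally automatic ($\Gaie$ is a level set of the Euclidean distance to $\Gamma_i$, not of $\dg$); they hold for $\epsilon\leq r$ thanks to (H2)--(H3) and the separation of the arcs, and deserve a sentence, although the paper is equally terse on this point when it deduces $O^{\epsilon}\subset\Omega$ from Lemma~\ref{l:proj}.
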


\begin{proof}
It is not difficult to see that the set
\[
O^{\epsilon} := \bigcup_{i=1}^N \{\Phie(y,t);\ y\in\Gte,\ 0<t\leq\le(y)\}
\]
covers $\Omega$ up to a set of Lebesgue measure $N\pi\epsilon^2$, i.e.\
$\meas(\Omega\setminus O^{\epsilon}) < N\pi\epsilon^2$.
Moreover, from Lemma~\ref{l:proj} it is clear that
$\Omega^{\epsilon}\subset O^{\epsilon}\subset\Omega$.
The conclusion now follows observing that
\[
O^{\epsilon}\setminus\Omega^{\epsilon}\subseteq
D := \bigcup_{i=1}^N \{\Phie(y,\le(y));\ y\in\Gtie\}\subseteq\rid\,,
\]
and that $D$
has vanishing Lebesgue measure (see \cite{CMf}, Corollary~6.8).
\end{proof}

Let us define the map $\Phi^{\epsilon}\colon\Gte\times\R\to\R^2$ by setting
$\Phi^{\epsilon}(y,t) = \Phie(y,t)$ if $y\in\Gtie$.

\begin{theorem}[Change of Variables]\label{t:cvar}
For every $h\in L^1(\Omega)$ we have
\begin{equation}\label{f:cvar}
\begin{split}
\int_{\Omega} h(x)\, dx &=
\lim_{\epsilon\to 0}\int_{\Gte}\left[\int_{0}^{\le(y)}
h(\Phi^{\epsilon}(y,t))\, [1-(t-\epsilon)\ke(y)]\, dt
\right]\, d\haus(y)
\\ & =
\lim_{\epsilon\to 0}
\int_{\Gte}\left[\int_{-\epsilon}^{\le(y)-\epsilon}
h(y+t\nie(y))\, [1-t\ke(y)]\, dt
\right]\, d\haus(y)\,.
\end{split}
\end{equation}
\end{theorem}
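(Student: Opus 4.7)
The plan is to apply the classical change-of-variables formula on each approximating set $\Omega^{\epsilon}$ and then pass to the limit using Theorem~\ref{t:ridge}. For fixed $\epsilon\in(0,r]$, I would first show that the restriction of $\Phi^{\epsilon}$ to $D^{\epsilon}:=\{(y,t) : y\in\Gte,\ 0<t<\le(y)\}$ is a bi-Lipschitz bijection onto $\Omega^{\epsilon}$. Surjectivity is the definition of $\Omega^{\epsilon}$; injectivity reflects the fact that, for $x=\Phi^{\epsilon}(y,t)$ with $0<t<\le(y)$, the point $y$ is the unique element of $\proje(x)$ lying in $\Gte$ (no focalization has occurred yet along the transport ray, by the geometric content of Lemma~\ref{l:proj} and the definition of $\le$).

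Next I would compute the Jacobian. Parametrizing each $\Gtie$ locally by arc length $s\mapsto y(s)$ (which is $C^{1,1}$, since $\Gamma^{\epsilon}$ is $C^{1,1}$), the Frenet-type relation $(\nie)'(s) = \ke(y(s))\,y'(s)$, valid a.e.\ by the definition of $\ke$, gives
\[
\frac{\partial\Phi^{\epsilon}}{\partial s} = \bigl[1-(t-\epsilon)\ke(y)\bigr]\,y'(s),
\qquad
\frac{\partial\Phi^{\epsilon}}{\partial t} = \nie(y).
\]
Since $y'(s)\perp\nie(y)$, the Jacobian determinant equals $1-(t-\epsilon)\ke(y)$, which is strictly positive on $D^{\epsilon}$ (again because $\le(y)$ is cut off before the focal radius). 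The classical area formula for bi-Lipschitz maps then yields, for every $h\in L^1(\Omega)$,
\[
\int_{\Omega^{\epsilon}} h(x)\,dx = \int_{\Gte}\int_0^{\le(y)} h\bigl(\Phi^{\epsilon}(y,t)\bigr)\,\bigl[1-(t-\epsilon)\ke(y)\bigr]\,dt\,d\haus(y).
\]

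Finally, Theorem~\ref{t:ridge} gives $\meas(\Omega\setminus\Omega^{\epsilon})\to 0$, and absolute continuity of the Lebesgue integral yields $\int_{\Omega^{\epsilon}} h\to\int_{\Omega} h$ as $\epsilon\to 0$, which proves the first equality in (\ref{f:cvar}). The second equality is then an immediate consequence of the substitution $s=t-\epsilon$ in the inner integral, combined with the explicit form $\Phi^{\epsilon}(y,t)=y+(t-\epsilon)\nie(y)$ which turns the integrand into $h(y+s\,\nie(y))\bigl[1-s\,\ke(y)\bigr]$ over $s\in[-\epsilon,\le(y)-\epsilon]$.

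The main obstacle, as I see it, is the careful verification of bijectivity of $\Phi^{\epsilon}$ on $D^{\epsilon}$ together with the positivity of the Jacobian: both hinge on transferring the non-focalization property from the projection structure on $\Gamma$ to that on $\Gamma^{\epsilon}$, a step that Lemma~\ref{l:proj} and formula (\ref{f:Mxeug}) are designed to support. A minor additional technicality is that $\ke$ is undefined at finitely many points of each $\Gtie$ (the images of the corner points), but this exceptional set has vanishing $\haus$-measure and can be harmlessly discarded from every integral and from the area-formula verification.
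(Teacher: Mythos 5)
Your proposal is correct and rests on the same overall strategy as the paper's proof --- parametrize $\Omega^{\epsilon}$ by the normal rays issuing from $\Gte$, establish the weighted fibre-wise formula on $\Omega^{\epsilon}$, and pass to the limit via Theorem~\ref{t:ridge} together with absolute continuity of the integral --- but it organizes the key step differently. The paper splits each $\Omega_i^{\epsilon}$ into the ``regular'' part $\Omega_i^{\epsilon,*}$, for which it invokes the arguments of \cite[Sect.~7]{CMf}, and the endpoint fans $\Omega_i^{\epsilon,A}$, $\Omega_i^{\epsilon,B}$, which it treats by hand in polar coordinates after observing that $\ke=-1/\epsilon$ there. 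You instead apply the area formula once, globally, to $\Phi^{\epsilon}$ on $D^{\epsilon}$, exploiting that $\Gamma^{\epsilon}$ is $C^{1,1}$ so that the circular caps around the endpoints are covered by the very same Jacobian $1-(t-\epsilon)\ke(y)$, which reduces to $t/\epsilon$ on the caps and thus reproduces the paper's polar-coordinate computation automatically. This buys a uniform argument with no case distinction, at the price of verifying injectivity and the Jacobian yourself rather than quoting \cite{CMf}. Two small repairs are needed. First, ``bi-Lipschitz'' is both more than you need and in general false: the inverse of $\Phi^{\epsilon}$ need not be Lipschitz up to the ridge, where the Jacobian may degenerate; what the area formula requires is Lipschitz continuity plus injectivity (an interior point of a transport ray is a regular point of $\dg$, hence has a unique ray, which pins down $y$ as the point of that ray at distance $\epsilon$ from its foot and $t=\dg(x)$), together with the positivity $1-(t-\epsilon)\ke(y)>0$ for $t<\le(y)$, which holds because the focal radius cannot be attained strictly before the cut point. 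Second, with the paper's conventions (motion inward along $\nie$, $\kappa>0$ on convex portions of the boundary) the Frenet-type relation reads $(\nie)'(s)=-\ke(y(s))\,y'(s)$; as written, your displayed relation would produce the factor $1+(t-\epsilon)\ke$, so the sign should be fixed --- your final Jacobian, and hence the resulting formula, is nevertheless the correct one.
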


\begin{proof}
{}From Theorem~\ref{t:ridge} we have that
\[
\int_{\Omega} h(x)\, dx =
\lim_{\epsilon\to 0} \int_{\Omega^{\epsilon}} h(x)\, dx =
\lim_{\epsilon\to 0}
\sum_{i=1}^N
\int_{\Omega_i^{\epsilon}} h(x)\, dx\,.
\]
For every $i=1,\ldots,N$
we have the decomposition
\[
\Omega_i^{\epsilon} = \Omega_i^{\epsilon,*}\cup \Omega_i^{\epsilon,A}
\cup \Omega_i^{\epsilon,B}\,,
\]
defined as in (\ref{f:decomp}) replacing $\Omega^*$ with $\Omega^{\epsilon}$.

Using the arguments developed in \cite[Sect.~7]{CMf},
it can be checked that
\begin{equation}\label{f:ints}
\begin{split}
\int_{\Omega_i^{\epsilon,*}} h(x)\, dx & =
\int_{\Gtie\cap\Omega_i^{\epsilon,*}}\left[\int_{0}^{\le(y)}
h(\Phi^{\epsilon}(y,t))\, [1-(t-\epsilon)\ke(y)]\, dt
\right]\, d\haus(y)\,.
\end{split}
\end{equation}
We shall now prove that
\begin{equation}\label{f:intA}
\int_{\Omega_i^{\epsilon,A}} h(x)\, dx =
\int_{\Gtie\cap\Omega_i^{\epsilon,A}}\left[\int_{0}^{\le(y)}
h(\Phi^{\epsilon}(y,t))\, [1-(t-\epsilon)\ke(y)]\, dt
\right]\, d\haus(y)\,.
\end{equation}
Let us define the region
\[
A_{\epsilon} = \{\Phi^{\epsilon}(y,t);\ y\in \Gtie\cap\Omega_i^{\epsilon,A},\
0 < t < \le(y)\}\,.
\]
It is clear that
$A_{\epsilon}\subset\Omega_i^{\epsilon,A}$.
Hence, it is enough to prove that
\begin{equation}\label{f:intAA}
\int_{A_{\epsilon}} h(x)\, dx =
\int_{\Gtie\cap\Omega_i^{\epsilon,A}}\left[\int_{0}^{\le(y)}
h(\Phi^{\epsilon}(y,t))\, [1-(t-\epsilon)\ke(y)]\, dt
\right]\, d\haus(y)\,.
\end{equation}
Observe that, by the very definition of $\ke$,
we have that $\ke(y) = -1/\epsilon$ for every
$y\in \Gtie\cap\Omega_i^A$.
Then, the integral in brackets becomes
\[
\int_{0}^{\le(y)}
h(\Phi^{\epsilon}(y,t))\, [1-(t-\epsilon)\ke(y)]\, dt
=
\int_{0}^{\le(y)}
h(\Phi^{\epsilon}(y,t))\, \frac{t}{\epsilon}\, dt\,.
\]
On the other hand,
the integral
over $A_{\epsilon}$
can be computed using
polar coordinates $(\rho, \theta)$.
We remark that
$\Gtie\cap\Omega_i^{\epsilon,A}$ is an arc of circumference of radius $\epsilon$,
so that $d\haus(y) = \epsilon\, d\theta$, hence
formula (\ref{f:intAA}) follows.

It is clear that (\ref{f:intA}) holds if
$\Omega_i^{\epsilon,A}$ is replaced by $\Omega_i^{\epsilon,B}$.
The identity (\ref{f:cvar}) then follows
from (\ref{f:ints}) and (\ref{f:intA}).
\end{proof}

\begin{lemma}\label{l:uf}
The function $\uf$ defined in (\ref{f:uf}) satisfies
$\uf = \dg$ in the set $\{\vf > 0\}$.
Moreover, if $u\in\Lipu$ is a function satisfying
$\uf\leq u\leq\dg$ in $\Omega$, then
$Du = D\dg$ in $\{\vf > 0\}$.
\end{lemma}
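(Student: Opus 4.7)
The plan is to derive $\uf = \dg$ pointwise on $\{\vf > 0\}$ first, and then to obtain the gradient equality by a minimum-principle argument.

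For the first claim, I would begin by choosing a representative of the $L^1$-class of $f$ which is supported in $\spt(f)$, i.e.\ $f = f\chi_{\spt(f)}$; this does not alter $\vf$ as an element of $L^1(\Omega)$. Fix $x\in\Omega^*$ with $\vf(x) > 0$. Since $M_x > 0$, positivity of the line integral
\[
\int_0^{\tau(x)} f\bigl(x + t\, D\dg(x)\bigr)\, M_x(t)\, dt > 0
\]
forces the existence of $s\in[0,\tau(x)]$ with $z := x + s\,D\dg(x)\in\spt(f)$. The point $z$ lies on the maximal transport ray from $\proj(x)$ to $m(x)$, along which $\dg$ is linear with unit slope; hence $\dg(z) = \dg(x) + s$ and $d(z,x) = s$. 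Therefore $d(z,x) \le \dg(z)$, and
\[
G_z(x) = \dg(z) - d(z,x) = \dg(x).
\]
Combined with the bound $\uf \le \dg$ already noted in the excerpt, this gives $\uf(x) = \dg(x)$ on $\{\vf > 0\}$.

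For the second claim, let $u\in\Lipu$ satisfy $\uf \le u \le \dg$. The first part yields $u = \dg$ on $\{\vf > 0\}$. Consider the nonnegative function $g := \dg - u$. At almost every $x\in\{\vf > 0\}$ both $u$ and $\dg$ are differentiable at $x$ (both being Lipschitz) and $g(x) = 0$; since $g\ge 0$ globally, $x$ is a global minimum point of $g$, and therefore $Dg(x) = 0$, i.e.\ $Du(x) = D\dg(x)$, as required.

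The only delicate point, and what I expect to be the main obstacle, is the handling of the representative of $f$ in Step 1: because $\vf(x)$ is defined through a one-dimensional line integral, it is a priori sensitive to modifications of $f$ on $2$-dimensional null sets. Selecting the representative $f\chi_{\spt f}$ resolves this issue cleanly without affecting any of the statements of Theorem~\ref{t:exi}, since $\vf$ is only required as an element of $L^1(\Omega)$ in the weak formulation (\ref{f:prob}).
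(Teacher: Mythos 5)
Your proof is correct, and it is worth separating the two claims. For the first claim your route is essentially the paper's: the paper records $\uf=\dg$ on $\spt(f)$ (by maximality of $\dg$ among functions in $\Lipug$) and then propagates this equality down the transport ray from a point $x_0\in\oray{x,m(x)}\cap\spt(f)$, whereas you evaluate the cone $G_z$ directly at $x$ for a point $z\in\spt(f)$ lying on the ray above $x$; these are the same computation in different order. A genuine plus of your write-up is the explicit choice of the representative $f\chi_{\spt(f)}$: since $\vf(x)$ is a one-dimensional integral along the ray, positivity of $\vf(x)$ only produces a point of $\spt(f)$ on the ray if $f$ vanishes pointwise off its essential support, a point the paper's ``since $\vf(x)>0$, the ray meets $\spt(f)$'' passes over in silence, and your remark that this does not affect $\vf$ as an element of $L^1(\Omega)$ is the right way to dispose of it. For the second claim the routes genuinely differ: the paper invokes \cite[Prop.~4.2]{Amb} (see also \cite[Lemma~7.3]{Cran}, \cite[Lemma~4.3]{CMi}), which gives $Du=D\dg$ at interior points of transport rays, while you use the elementary first-order condition: $g=\dg-u\geq 0$ in $\Omega$ vanishes on $\{\vf>0\}$, so each such point is an interior minimum of $g$ and $Dg=0$ wherever $g$ is differentiable, which by Rademacher is almost everywhere. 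Your argument is self-contained and yields $Du=D\dg$ a.e.\ on $\{\vf>0\}$, which is exactly what is used in Step~2 of the proof of Theorem~\ref{t:exi}; the cited results give the stronger pointwise statement along ray interiors, which is not needed here.
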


\begin{proof}
{}From the very definition of $\uf$ it is plain that
$\uf(x) \geq \dg(x)$ for every $x\in\spt(f)$.
On the other hand, by the maximality of $\dg$ among all
functions of $\Lipu$ vanishing on $\Gamma$, we conclude
that
\begin{equation}\label{f:uga}
\uf = \dg\ \text{in } \spt(f).
\end{equation}
Now let $x\in \{\vf > 0\}$ and let us prove that
$\uf(x) = \dg(x)$.
{}From the definition of $\vf$ we have that
$x\in\Omega^*$, that is, $\proj(x)$ is a singleton
$\{y\}$, with $y\in\Gamma$.
Let $\cray{y,z}$ be the transport ray through $x$.
Since $\vf(x) > 0$, we have that
$\oray{x,z}\cap\spt(f)\neq\emptyset$.
Let $x_0$ be a point belonging to this intersection.
{}From (\ref{f:uga}) we have that
$\uf(x_0) = \dg(x_0)$, which in turn implies that
$\uf = \dg$ along the segment
$\cray{y, x_0}$.
Since $x\in\cray{y,x_0}$, we conclude that
$\uf(x) = \dg(x)$,
and the first part of the lemma is proved.

The second part follows from \cite[Prop.~4.2]{Amb}
(see also \cite[Lemma~7.3]{Cran}
and \cite[Lemma~4.3]{CMi})
upon observing that $u=\dg$ in $\{\vf>0\}$.
\end{proof}


\begin{lemma}\label{l:max}
Let $(u, v)$ be a solution of (\ref{f:prob}). Then
\begin{equation}\label{f:max}
\int_{\Omega} f\, u = \max\left\{ \int_{\Omega} f\, w\,;\
w\in\Lipug\right\}.
\end{equation}
\end{lemma}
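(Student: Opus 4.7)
The plan is to establish $\int_\Omega f\,(u-w)\,dx \geq 0$ for every $w\in\Lipug$; since $u\in\Lipug$ itself, taking $w=u$ then yields the identity \eqref{f:max}. The idea is to use $u-w$ as a ``test function'' in the weak equation of \eqref{f:prob} and to exploit the slope constraints $|Du|\leq 1$, $|Du|=1$ on $\{v>0\}$, together with the $1$-Lipschitz bound on $w$.

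Since $u-w$ does not belong to $\Cc$, an approximation step is needed first. Set $\psi:=u-w$: then $\psi$ is Lipschitz on $\overline\Omega$, vanishes on $\Gamma$, and from $u,w\in\Lipu$ one gets $|\psi(x)-\psi(y)|\leq 2\,\dist(x,y)$ for all $x,y\in\overline\Omega$. Choosing $y\in\Gamma$ and taking the infimum yields the linear bound $|\psi(x)|\leq 2\,\dg(x)$. A standard cutoff-and-mollification argument then produces a sequence $\phi_n\in\Cc$ such that $\phi_n\to\psi$ uniformly on $\overline\Omega$, $\|D\phi_n\|_{L^\infty(\Omega)}$ is uniformly bounded, and $D\phi_n\to D\psi$ pointwise a.e.\ in $\Omega$: concretely, one multiplies $\psi$ by a Lipschitz cutoff equal to $1$ on $\{\dg\geq 1/n\}$ and vanishing on $\{\dg\leq 1/(2n)\}$ (here the bound $|\psi|\leq 2\dg$ is exactly what keeps the product's gradient bounded in $n$), and then convolves with a mollifier of scale $\ll 1/n$.

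Inserting $\phi_n$ in the weak formulation of \eqref{f:prob} and letting $n\to\infty$ by dominated convergence (legitimate because $v,f\in L^1(\Omega)$, $|Du|\leq 1$, $\|D\phi_n\|_\infty$ is uniformly bounded and $\phi_n\to\psi$ uniformly) one obtains
\begin{equation*}
\int_\Omega v\,\pscal{Du}{D(u-w)}\,dx \;=\; \int_\Omega f\,(u-w)\,dx.
\end{equation*}
The integrand on the left vanishes on $\{v=0\}$. On $\{v>0\}$ the third condition in \eqref{f:prob} gives $|Du|=1$ a.e., while $w\in\Lipu$ forces $|Dw|\leq 1$ a.e.\ in $\Omega$ (for an interior point the path distance coincides with the Euclidean one on small enough segments, so Rademacher's theorem applies). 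Cauchy--Schwarz then yields
\begin{equation*}
\pscal{Du}{D(u-w)} \;=\; |Du|^2 - \pscal{Du}{Dw} \;\geq\; 1 - |Du|\,|Dw| \;\geq\; 0
\quad\text{a.e.\ on }\{v>0\}.
\end{equation*}
Multiplying by $v\geq 0$ and integrating proves $\int_\Omega f\,(u-w)\,dx\geq 0$, as required.

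The only delicate point is the approximation argument: the class $\Cc$ is strictly smaller than the space of Lipschitz functions that merely vanish on $\Gamma$, and one must be sure that the limiting procedure preserves the integration by parts against the $L^1$-density $v\,Du$. The linear control $|\psi|\leq 2\,\dg$ is precisely what makes the cutoff near $\Gamma$ cost nothing in $L^\infty$ and keeps the gradients uniformly bounded, thereby reducing everything to a routine mollification inside the open set $\R^2\setminus\Gamma$.
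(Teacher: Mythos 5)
Your proof is correct and follows essentially the same route as the paper: extend the weak identity to Lipschitz test functions vanishing on $\Gamma$ (the paper invokes this as a ``standard approximation argument,'' which you spell out via the bound $|u-w|\leq 2\dg$, cutoff and mollification), then test with $\phi=u-w$ and use $v\geq 0$, $|Dw|\leq 1$, $|Du|=1$ a.e.\ on $\{v>0\}$ to get the sign of $\int_\Omega v\,\pscal{Du}{D(u-w)}$. No gap to report.
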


\begin{proof}
By a standard approximation argument we have that
\begin{equation}\label{f:varl}
\int_{\Omega} v\, \pscal{Du}{D\phi} =
\int_{\Omega} f\,\phi
\qquad
\forall \phi\colon\overline{\Omega}\to\R\
\text{Lipschitz, $\phi=0$ on $\Gamma$}.
\end{equation}
Let $w\in\Lipug$.
Since $v\geq 0$, $|Dw|\leq 1$, $|Du|\leq 1$, and $|Du| = 1$ a.e.\ in $\{v>0\}$,
we have that
\[
\int_{\Omega} v\, \pscal{Du}{(Dw-Du)}\leq 0\,.
\]
Hence we infer that
\[
\int_{\Omega} f\, u - \int_{\Omega} f\, w
\geq
\int_{\Omega} v\, \pscal{Du}{(Dw-Du)} - f(w-u) = 0,
\]
where the last equality follows from (\ref{f:varl})
with $\phi = w-u$.
Since this inequality holds for every $w\in\Lipug$,
(\ref{f:max}) follows.
\end{proof}

\begin{proof}[Proof of Theorem \ref{t:exi}]
We divide the proof into three steps.
In the first one we shall prove that $(\dg,\vf)$ is a solution to
(\ref{f:prob}).
Then, in the second step, we shall prove that also
$(u,\vf)$ is a solution for every $u\in\Lipu$
satisfying $\uf\leq u\leq\dg$.
Finally, in the last step
we shall prove that if $u\in\Lipug$ is a nonnegative function
with $\{u<\uf\}\neq\emptyset$, then for every nonnegative
function $v\in L^1(\Omega)$ the pair
$(u,v)$ is not a solution of (\ref{f:prob}).

\smallskip\noindent
\textbf{Step 1.}
We give only a sketch of the proof,
since it follows the lines of the proof of
Theorem~7.2 in \cite{CMf}.

Given $\phi\in\Cc$, we have to prove that (\ref{f:prob})
holds with $u=\dg$.
Using the Change of Variables formula (\ref{f:cvar}) we have that
\begin{equation}\label{f:pdeb}
\int_{\Omega} f\phi=
\lim_{\epsilon\to 0}\int_{\Gte}\left[\int_{0}^{\le(y)}
\phi(\Phi^{\epsilon}(y,t))\,f(\Phi^{\epsilon}(y,t))\, [1-(t-\epsilon)\ke(y)]\, dt
\right]\, d\haus(y)\,.
\end{equation}
For every fixed $y\in\Gte$, let us integrate by parts the term in brackets.
Recalling that $\Phi^{\epsilon}(y,0) = y-\epsilon\nie(y)\in\Gamma$,
we have that $\phi(\Phi^{\epsilon}(y,0)) = 0$, hence the integration by
parts gives
\[
\begin{split}
I(y)  := & \int_{0}^{\le(y)}
\phi(\Phi^{\epsilon}(y,t))\,f(\Phi^{\epsilon}(y,t))\, [1-(t-\epsilon)\ke(y)]\, dt
\\  = &
\int_{0}^{\le(y)}
\pscal{D\phi(\Phi^{\epsilon}(y,t))}{\nie(y)}\,
\int_{t}^{\le(y)}
f(\Phi^{\epsilon}(y,s))\, [1-(s-\epsilon)\ke(y)]\, ds\, dt\,.
\end{split}
\]
{}From the definition (\ref{f:vf}) of $\vf$
and (\ref{f:Mxeug}) we deduce that
\[
\vf(\Phi^{\epsilon}(y,t)) =
\int_{t}^{\le(y)}
f(\Phi^{\epsilon}(y,s))\, \frac{1-(s-\epsilon)\ke(y)}{1-(t-\epsilon)\ke(y)}\, ds\,,
\]
so that
\[
I(y) =
\int_{0}^{\le(y)}
\pscal{D\phi(\Phi^{\epsilon}(y,t))}{\nie(y)}\,
\vf(\Phi^{\epsilon}(y,t))\, [1-(t-\epsilon)\ke(y)]\, dt\,.
\]
Observe now that
$D\dg(\Phi^{\epsilon}(y,t)) = \nie(y)$ for every
$y\in\Gte$ and every $t\in (0,\le(y))$.
Substituting the last expression for $I(y)$ in (\ref{f:pdeb}) and using again
the Change of Variables formula (\ref{f:cvar}) we finally get
\[
\begin{split}
\int_{\Omega} f\phi & =
\lim_{\epsilon\to 0}\int_{\Gte}\left[\int_{0}^{\le(y)}
(\vf\, \pscal{D\phi}{D\dg})(\Phi^{\epsilon}(y,t))\,
[1-(t-\epsilon)\ke(y)]\, dt\right]\, d\haus(y)
\\ & =
\int_{\Omega} \vf\, \pscal{D\phi}{D\dg} \,.
\end{split}
\]
By the way, choosing $\phi = \dg$ (see (\ref{f:varl})) we have that
\[
\int_{\Omega} \vf = \int_{\Omega} f\dg < +\infty,
\]
hence the nonnegative function $\vf$ belongs to $L^1(\Omega)$.

\smallskip\noindent
\textbf{Step 2.}
Let $u\in\Lipu$ satisfy $\uf\leq u\leq\dg$.
{}From Lemma~\ref{l:uf} we have that
$u=\dg$ and $Du = D\dg$
in the set $\{\vf > 0\}$, hence
\[
\int_{\Omega} \vf\, \pscal{Du}{D\phi} =
\int_{\Omega} \vf\, \pscal{D\dg}{D\phi} =
\int_{\Omega} f\,\phi
\qquad
\forall \phi\in\Cc,
\]
where the last equality follows from Step~1.

\smallskip\noindent
\textbf{Step 3.}
Let $u\in\Lipug$ be a nonnegative function
with $\{u<\uf\}\neq\emptyset$.
{}From \cite{CMi}, Proposition~4.4(iii), we deduce that also the set
$\{x\in\spt(f);\ u(x) < \dg(x)\}$
is not empty and, in particular,
$u<\dg$ on a set of positive measure where $f>0$.
Since $u\leq\dg$ and $f\geq 0$, we infer that
\begin{equation}\label{f:nomax}
\int_{\Omega} f\, u < \int_{\Omega} f\, \dg\,.
\end{equation}
On the other hand
$(u,v)$ is a solution to (\ref{f:prob}),
so that (\ref{f:max}) holds,
in contradiction with (\ref{f:nomax}).
\end{proof}


\section{A test example}
\label{s:test}

In this section we describe a simple example which illustrates very well how the
presence of vertical walls on the boundary can influence the regularity of solutions
$(u,v)$ of (\ref{f:prob}). Let $\Omega=(0,1)^2$ be the unit square of $\R^2$, and
$\Gamma=\{(x,y)\in\R^2:\ 0\leq x\leq 0.5\ ;\ y=0\}$ the only open part of its
boundary. Assume $f\equiv 1$ in all of $\Omega$. From the picture in Fig. \ref{Fig:4}
we see that the sand transport rays behave differently in the two half sides of the
table: in the left-hand side they lay parallel in the direction of $\Gamma$, whereas
in the right-hand side they converge all together into the extremal point
$P=(0.5,0)$, creating a singularity.

\begin{figure}
\begin{center}
\includegraphics[height=5cm]{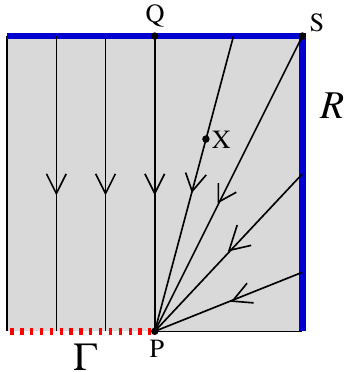}
\end{center}
\caption{The domain $\Omega$ of the example: $P$ is a singular boundary point, the
solution $v$ results discontinuous along the line $PQ$} \label{Fig:4}\end{figure}

Since $f=1$ in $\Omega$, we have that $\uf = \dg$ so that the only possible
standing layer is $u=\dg$.
The explicit computation for the solution $(\dg,\vf)$ of Theorem~\ref{t:exi}
can be done by
decomposition of the domain $\Omega$ along the segment $\overline{PQ}$.
Using polar
coordinates $(r,\theta)$ centered in $P$ (with $\theta\in[0,\frac{\pi}{2}]$) in the
right hand side, from (\ref{f:Mx}) and (\ref{f:vf}) we get in particular
\begin{equation}\label{f:sol_test}
v_f(x,y)=
\left\{{\begin{array}{ll}
1-y, &\textrm{if } x\leq 0.5, \\
\\\int_r^{l(\theta)} \frac{\rho}{r}\ d\rho\,,\ &\textrm{if } x>0.5,
\end{array}}
\right.
\end{equation}
where $l(\theta)$ denotes the length of the transport ray from $P$ to the ridge on
the wall boundary along the $\theta$ direction (see (\ref{f:lx})).
It results that
$v_f\in L^1(\Omega)$ is unbounded near $P$,
it is discontinuous along the segment $\overline{PQ}$,
and its gradient is discontinuous along the segment $\overline{PS}$.
The graph of the functions $\dg$, $\vf$ and their level lines are shown in Fig.~\ref{Fig:5}.
\begin{figure}
\begin{center}
\vspace*{-3.5cm}
\includegraphics[height=14cm,width=12cm]{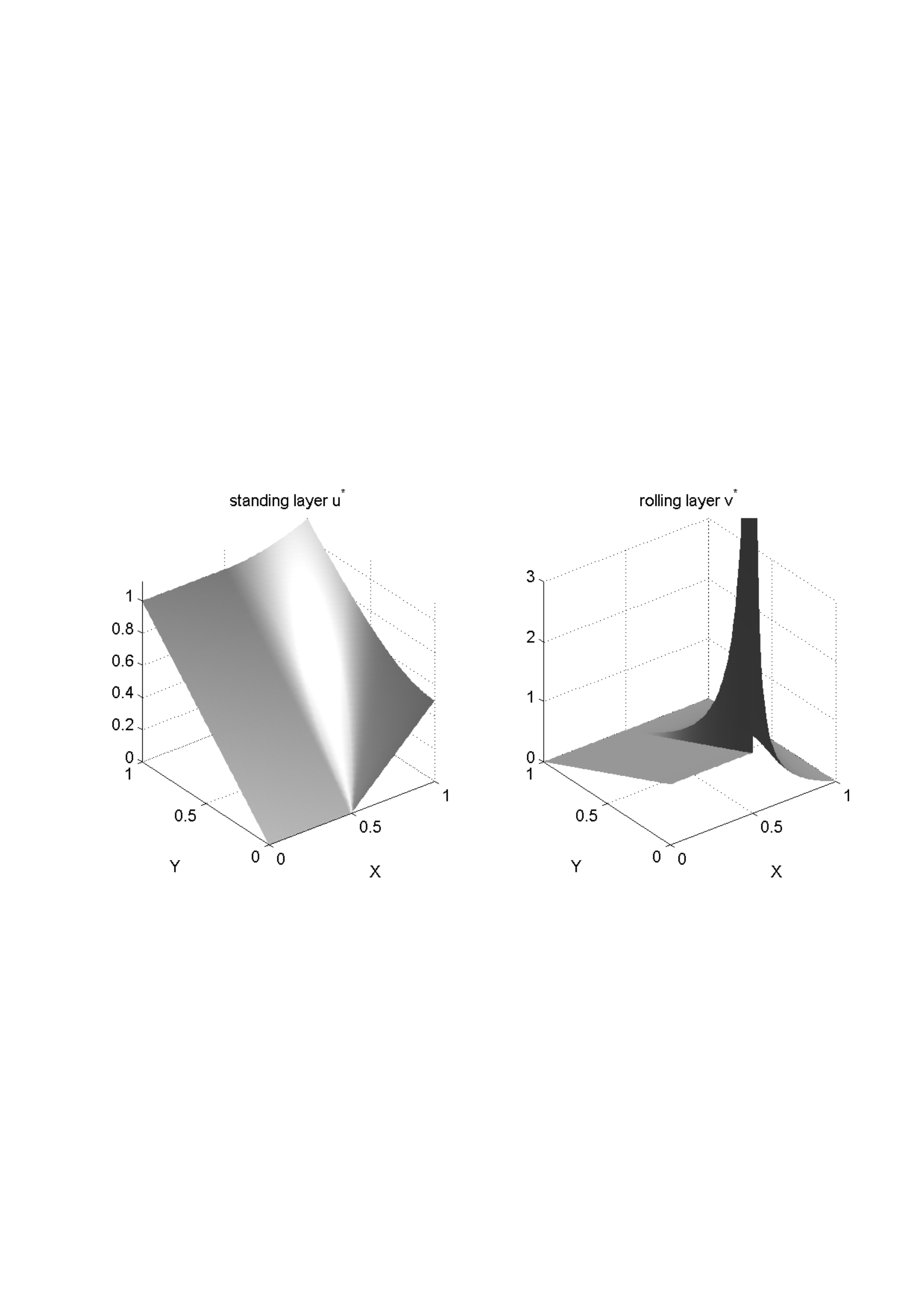}

\vspace*{-9cm}
\includegraphics[height=14cm,width=10cm]{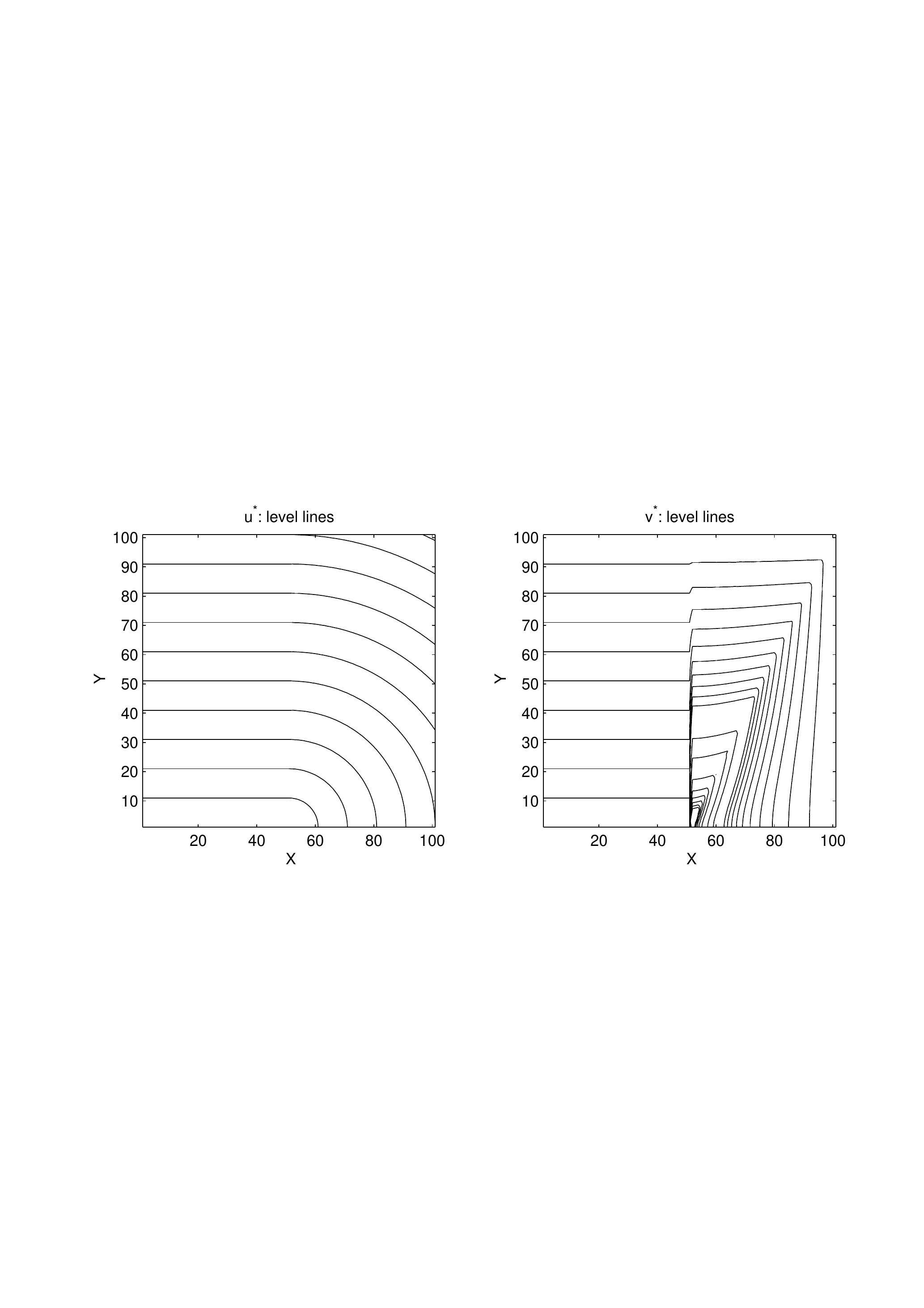}
\end{center}
\vspace*{-4.5cm}\caption{Exact solution $(d_\Gamma,v_f)$ of the test example
with level lines}
\label{Fig:5}
\end{figure}

\section{Numerical detection of stationary solutions}
\label{s:num}

In \cite{FFV} the numerical approximation of the two-layer model of \cite{HK} was
studied to simulate growing sandpiles on an open flat table. Here we have considered
the natural generalization (\ref{f:HK}) of such a model in the case of the partially open table problem,
in order to get solutions of (\ref{f:prob}) as equilibrium solutions of a
system of two evolutive partial differential equations. The extension of the finite
difference scheme introduced in \cite{FFV} to such a system is enough
straightforward. For a given discretization step $h=\Delta x$, we introduce in the
domain $\Omega$ (for simplicity, a rectangle) a uniform grid of nodes $x_{i,j}$, and
we denote as usual by $(u^n_{i,j},v^n_{i,j})$ the components of the discrete
solutions at time $t^n=n\Delta t$. Then our fully explicit finite difference scheme
can be written as

\begin{eqnarray}
&v_{i,j}^{n+1}=v_{i,j}^n + \Delta t \left[ v_{i,j}^n D^2 u^n_{i,j} +
\overline{D}v^n_{i,j}\cdot Du^n_{i,j} -
(1-|Du^n_{i,j}|)v^n_{i,j}+f_{i,j} \right], \label{2fd1}\\
&u_{i,j}^{n+1}=u^n_{i,j}+\Delta t (1-|D u^n_{i,j}|)v^n_{i,j}, \label{2fd2}\\
&u^0_{i,j}=v^0_{i,j}=0 \quad \forall i,j, \label{2fd3}\\
&u^n_{i,j}=0 \quad \textrm{ if } x_{i,j}\in\Gamma,\ \forall n, \label{2fd4}\\
&v^n_{i,j}(Du^n_{i,j}\cdot \nu_{i,j})=0 \quad \textrm{ if }
x_{i,j}\in\Omega\setminus\Gamma,\ \forall n, \label{2fd5}
\end{eqnarray}
where the discrete gradient vectors $Du^n$ and $\overline{D}v^n$ are computed
respectively, component by component, through the maxmod and the upwind finite
difference operators, and $D^2 u$ denotes the standard five-points discretization of
the Laplace operator on the grid (see \cite{FFV} for the details). What is new in
this scheme is the wall boundary condition (\ref{2fd5}), whose implementation
requires some comments. The standard way is the following: after (\ref{2fd1}) and
(\ref{2fd2}) have been applied, we look for the sign of $(Du^{n+1}\cdot \nu)$ at the
wall nodes. If it is strictly positive (as it happens for nodes which are in the
extended ridge $\rid$, that is which are starting points of a transport ray to
$\Gamma$) then $v_{i,j}^{n+1}$ is set to zero. If this is not the case, one should
modify $u_{i,j}^{n+1}$ on the boundary in order to fulfill (\ref{2fd5}). This is not
the best strategy. In fact this situation corresponds to the pathological case of
nodes belonging to boundary transport rays, that is when there exist straight portions
of the wall boundary, as in the test example of Section 4. Referring to Fig.
\ref{Fig:4}, on the west side of the square the sand flow is parallel to the boundary
(and also to the mesh in this particular case) and there is no need to impose any
boundary condition: the discrete solution $u^n$ naturally satisfies a no flux
condition at those points. Also the south portion of the wall in the example
coincides with a transport ray, but in that case the normal derivative of $u$ is
naturally negative in the boundary nodes, and it becomes zero only asymptotically in
time (at the equilibrium). Then, by continuity arguments, the best choice seems to us
simply to impose a no flux boundary condition for $v$ at those points.

The direct application of scheme (\ref{2fd1})-(\ref{2fd5}) is anyway not so
efficient, due to the numerical difficulty of handling unbounded discontinuous
solutions. In Fig. \ref{Fig:6}, the computed stationary solutions for (\ref{f:HK})
and their level lines are shown in the test example of Section 4 (compare with Fig.
\ref{Fig:5}).

\begin{figure}
\begin{center}
\vspace*{-3.5cm}
\includegraphics[height=14cm,width=12cm]{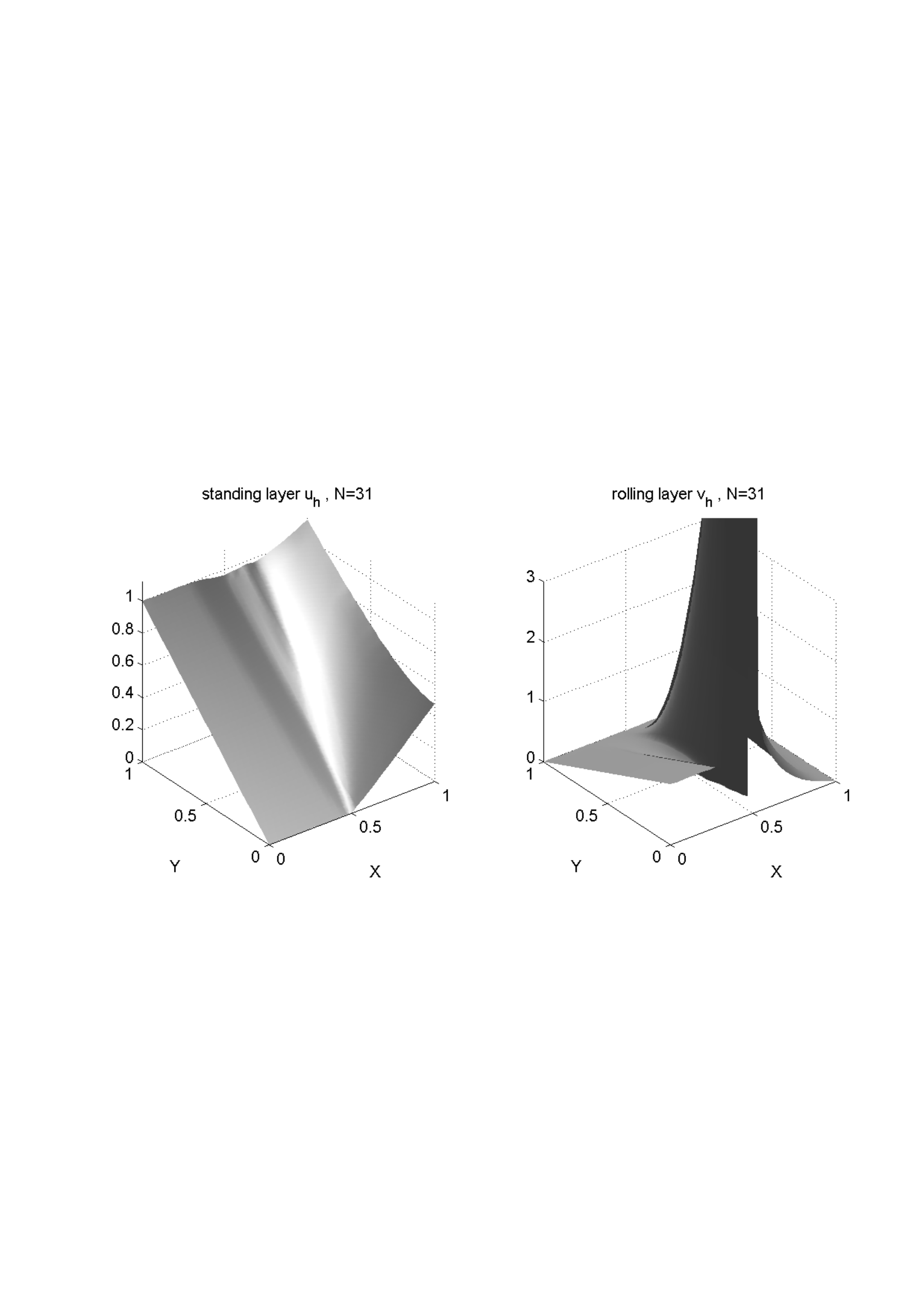}

\vspace*{-9cm}
\includegraphics[height=14cm,width=10cm]{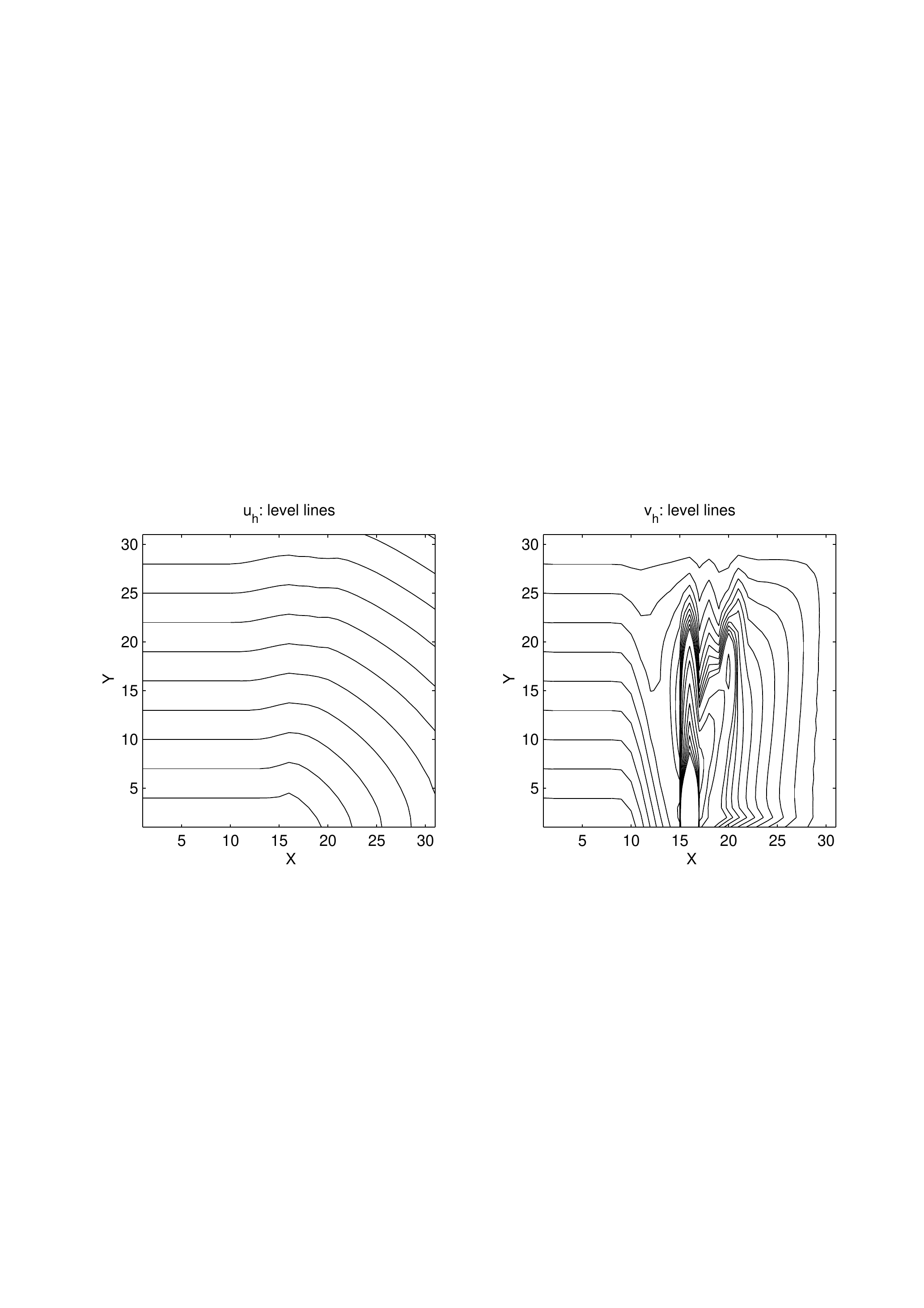}
\end{center}
\vspace*{-4.5cm}\caption{Numerical stationary solutions $u$ and $v$ of system
(\ref{f:HK}) in the test example and their level lines} \label{Fig:6}
\end{figure}

Despite the fact that the real sand flow is completely separated in the left and the
right subregions of $\Omega$, at the numerical level the flow travels through the
grid points and then it can cross the separation line.
More precisely, the transport
path for sand from a point $X$ in the right hand side should be the segment
$\overline{XP}$ in Fig. \ref{Fig:4};
on the contrary, the algorithm splits this flow along vertical and
horizontal segments connecting nodes and then part of this sand reaches the segment
$\overline{PQ}$ even far from $P$ (and from there eventually the left-hand side of
the table). That is why the simple use at the discrete level of the same
decomposition strategy adopted to characterize the stationary solutions is not able
to reduce this phenomenon. As a test we applied in fact on the same uniform grid the
scheme (\ref{2fd1})-(\ref{2fd5}) separately in the two subregions of $\Omega$, with
suitable wall boundary conditions on the cut (the $\overline{PQ}$ segment). The
results (see Fig. \ref{Fig:7}) show an evident improvement of the solutions only in
the left (that is the regular) subregion.

Better results can be expected by coupling decomposition with suitable grid
strategies. Keeping the uniform grid, the use of semi-lagrangian type schemes along
characteristics should give a better trace of the correct transport directions.
On the other hand, a different idea could be
to employ unstructured grids (and mesh refinements near the singularity regions) in
order to improve the accuracy. The discussion of these approaches will be the goal of
a forthcoming paper. The main difficulty is, anyway, that a sharp domain
decomposition requires the a priori knowledge of the ridge set, which is not in
general an easy task. For example, if we slightly modify the table in Fig.~\ref{Fig:4}
by simply opening a symmetric portion of the boundary on its northern
side ($\{(x,y):\ y=1,\ 0.5\leq x\leq 1\}$), the situation becomes completely
different: a curved internal ridge appears, and with the help of the normal
directions to the singular boundary points it subdivides the table into four distinct
flow regions (see Fig. \ref{Fig:8}, where the $v$ surface is now seen from above,
showing, in white, the ridge set profile).

\begin{figure}
\begin{center}
\vspace*{-3.5cm}
\includegraphics[height=14cm,width=12cm]{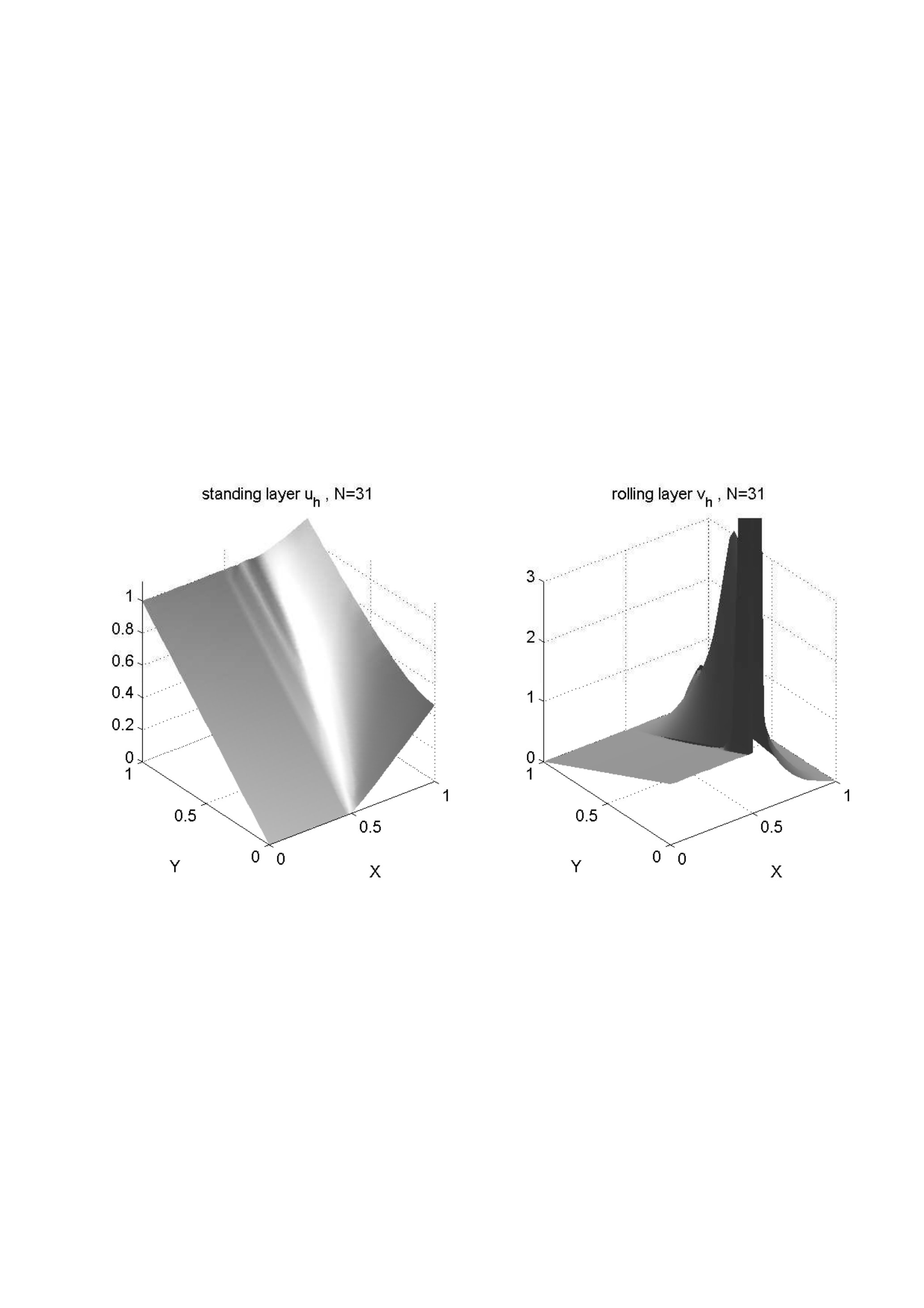}

\vspace*{-9cm}
\includegraphics[height=14cm,width=10cm]{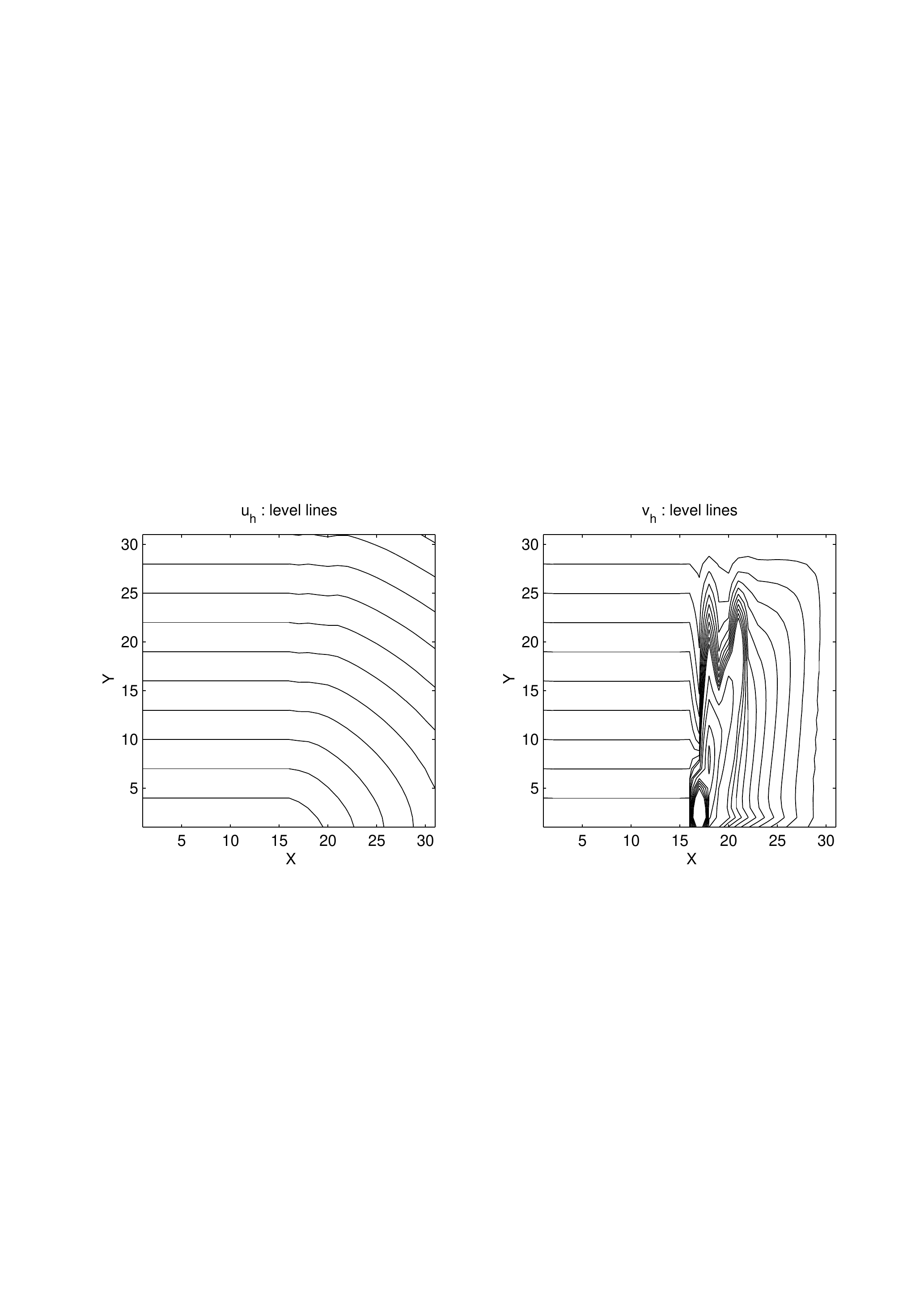}
\end{center}
\vspace*{-4.5cm}\caption{Numerical stationary solutions $u$ and $v$ of system
(\ref{f:HK}) in the test example and their level lines: solution by decomposition}
\label{Fig:7}
\end{figure}

\begin{figure}
\begin{center}
\vspace*{-3.5cm}
\includegraphics[height=14cm,width=12cm]{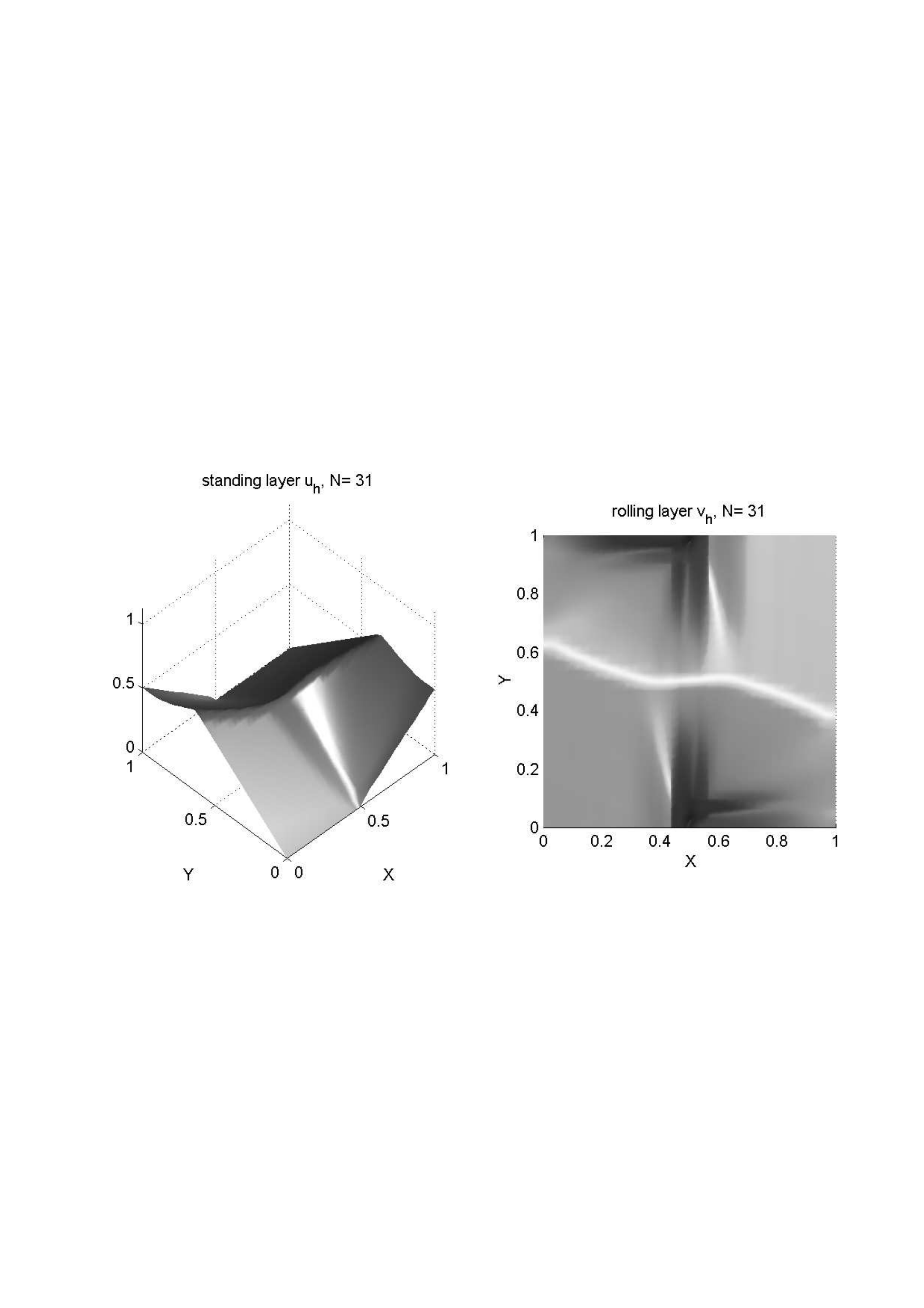}
\end{center}
\vspace*{-4.5cm}\caption{Numerical stationary solutions $u$ and $v$ (view from above)
of system (\ref{f:HK}) in the modified test example } \label{Fig:8}
\end{figure}

%

\end{document}